\thanks{The author was supported by NSF grant DMS-1115455}
\newcommand{\Q}{\mathbb{Q}}
\newcommand{\C}{\mathbb{C}}
\renewcommand{\H}{\mathbb{H}}
\newcommand{\Z}{\mathbb{Z}}
\newcommand{\F}{\mathbb{F}}
\newcommand{\Fp}{\F_p}
\newcommand{\Fptwo}{\F_{p^2}}
\newcommand{\Fq}{\F_q}
\newcommand{\fa}{\mathfrak{a}}
\newcommand{\fb}{\mathfrak{b}}
\newcommand{\fl}{\mathfrak{l}}
\newcommand{\fp}{\mathfrak{p}}
\newcommand{\inkron}[2]{\genfrac {(}{)}{0.9pt}{}{#1}{#2}}
\newcommand{\M}{\textsf{M}}
\newcommand{\Ell}{{\rm Ell}}
\newcommand{\tr}{\operatorname{tr}}
\newcommand{\Aut}{\operatorname{Aut}}
\renewcommand{\vec}[1]{\boldsymbol{#1}}
\renewcommand{\O}{\mathcal{O}}
\def\disc{\operatorname{disc}}
\def\cl{\operatorname{cl}}
\def\Gal{\operatorname{Gal}}
\def\End{\operatorname{End}}
\def\iso{\xrightarrow{\sim}}
\def\ndiv{\centernot\mid}
\def\chr{\operatorname{char}}
\theoremstyle{plain}
\newtheorem{theorem}{Theorem}
\newtheorem{lemma}[theorem]{Lemma}
\newtheorem{proposition}[theorem]{Proposition}
\theoremstyle{definition}
\newtheorem{definition}[theorem]{Definition}
\newtheorem{remark}[theorem]{Remark}
\newtheorem{example}[theorem]{Example}
\begin{document}

\title{Isogeny volcanoes}
\author{Andrew V. Sutherland}
\address{Department of Mathematics\\Massachusetts Institute of Technology\\Cambridge, Massachusetts\ \  02139}
\email{drew@math.mit.edu}

\begin{abstract}
The remarkable structure and computationally explicit form of isogeny graphs of elliptic curves over a finite field has made them an important tool for computational number theorists and practitioners of elliptic curve cryptography.
This expository paper recounts the theory behind these graphs and examines several recently developed algorithms that realize substantial (often dramatic) performance gains by exploiting this theory.
\end{abstract}
\maketitle
\section{Introduction}
A \emph{volcano} is a certain type of graph, one whose shape reminds us of the geological formation of the same name.   A typical volcano consists of a cycle with isomorphic balanced trees rooted at each vertex.
\medskip

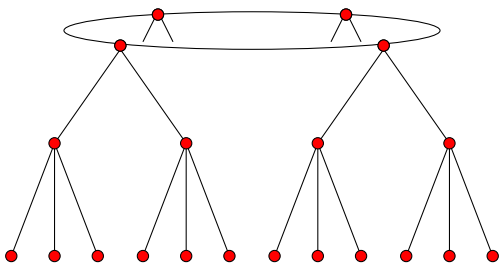
\begin{figure}[htp]
\begin{tikzpicture}[scale=2.5]
\def \x {-4.5}
\def \backspread {0.08}
\def \frontspreadtop {0.35}
\def \frontspreadbottom {0.23}
\def \vradius {0.03}
\def \backshift {1.4}
\def \lheight {0.6}

\def \ell {3}
\foreach \x in {0}
{
\draw (\x,0) ellipse (1 and 0.1);
\foreach \i in {2,...,\ell}
{
\foreach \off in {-0.7,0.7}
{
\draw ({\x+\off/\backshift},0.1) -- ({\x+\off/\backshift + \backspread * ((2 - \ell) + 2 * (\i - 2))},-0.06);
\draw[fill=red] ({\x+\off/\backshift},0.085) circle (\vradius);
\def \xx {{\x+\off + \frontspreadtop * ((2 - \ell) + 2 * (\i - 2))}}
\draw (\x+\off,-0.1) -- (\xx,-\lheight);
\foreach \j in {1,...,\ell}
{
\def \xxx {{(\xx) + \frontspreadbottom/2 * ((1 - \ell) + 2 * (\j - 1))}}
\draw (\xx,-\lheight) -- (\xxx,-2 * \lheight);
\draw [fill=red] (\xxx,-2*\lheight) circle (\vradius);
}
\draw [fill=red] (\xx,-\lheight) circle (\vradius);
\draw[fill=red] (\x+\off,-0.08) circle (\vradius);
}
}
}
\end{tikzpicture}
\vspace{-10pt}
\caption{A volcano.}\label{figure:sideview}
\end{figure}

More formally, let $\ell$ be a prime.  We define an $\ell$-\emph{volcano} as follows.

\begin{definition}
An $\ell$-\emph{volcano} $V$ is a connected undirected graph whose vertices are partitioned into one or more \emph{levels} $V_0,\ldots,V_d$ such that the following hold:
\begin{enumerate}
\item
The subgraph on $V_0$ (the \emph{surface}) is a regular graph of degree at most 2.
\item
For $i>0$, each vertex in $V_i$ has exactly one neighbor in level $V_{i-1}$, and this accounts for every edge not on the surface.
\item
For $i<d$, each vertex in $V_i$ has degree $\ell+1$.
\end{enumerate}
\end{definition}

Self-loops and multi-edges are permitted in an $\ell$-volcano, but it follows from (ii) that these can only occur on the surface.  The integer $d$ is the \emph{depth} of the volcano (some authors use \emph{height}).  When $d=0$ only (i) applies, in which case $V$ is a connected regular graph of degree at most~$2$.
This is either a single vertex with up to two self-loops, two vertices connected by one or two edges, or a simple cycle on three or more vertices (the general case).
Figure~\ref{figure:topview} gives an overhead view of the volcano depicted in Figure~\ref{figure:sideview}, a 3-volcano of depth 2.

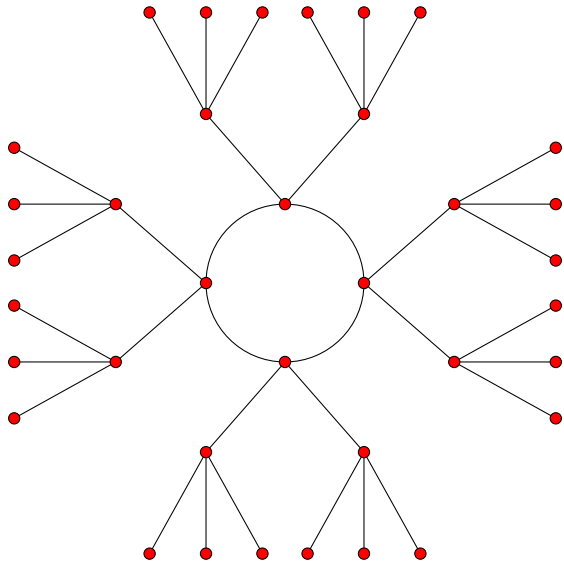
\begin{figure}[htp]
\begin{tikzpicture}[scale=1.5]
\draw (0,0) circle (0.7);
\draw[black] (0,-0.7) -- (-0.7,-1.5);
\draw[black] (0,-0.7) -- (0.7,-1.5);
\draw[black] (0,0.7) -- (-0.7,1.5);
\draw[black] (0,0.7) -- (0.7,1.5);
\draw[black] (-0.7,0) -- (-1.5,-0.7);
\draw[black] (-0.7,0) -- (-1.5,0.7);
\draw[black] (0.7,0) -- (1.5,-0.7);
\draw[black] (0.7,0) -- (1.5,0.7);
\draw[black] (-0.7,-1.5) -- (-0.2,-2.4);
\draw[black] (-0.7,-1.5) -- (-0.7,-2.4);
\draw[black] (-0.7,-1.5) -- (-1.2,-2.4);
\draw[black] (-0.7,1.5) -- (-0.2,2.4);
\draw[black] (-0.7,1.5) -- (-0.7,2.4);
\draw[black] (-0.7,1.5) -- (-1.2,2.4);
\draw[black] (0.7,-1.5) -- (0.2,-2.4);
\draw[black] (0.7,-1.5) -- (0.7,-2.4);
\draw[black] (0.7,-1.5) -- (1.2,-2.4);
\draw[black] (0.7,1.5) -- (0.2,2.4);
\draw[black] (0.7,1.5) -- (0.7,2.4);
\draw[black] (0.7,1.5) -- (1.2,2.4);
\draw[black] (-1.5,-0.7) -- (-2.4,-0.2);
\draw[black] (-1.5,-0.7) -- (-2.4,-0.7);
\draw[black] (-1.5,-0.7) -- (-2.4,-1.2);
\draw[black] (-1.5,0.7) -- (-2.4,0.2);
\draw[black] (-1.5,0.7) -- (-2.4,0.7);
\draw[black] (-1.5,0.7) -- (-2.4,1.2);
\draw[black] (1.5,-0.7) -- (2.4,-0.2);
\draw[black] (1.5,-0.7) -- (2.4,-0.7);
\draw[black] (1.5,-0.7) -- (2.4,-1.2);
\draw[black] (1.5,0.7) -- (2.4,0.2);
\draw[black] (1.5,0.7) -- (2.4,0.7);
\draw[black] (1.5,0.7) -- (2.4,1.2);
\draw[fill=red] (0,0.7) circle (0.05);
\draw[fill=red] (0,-0.7) circle (0.05);
\draw[fill=red] (0.7,0) circle (0.05);
\draw[fill=red] (-0.7,0) circle (0.05);
\draw[fill=red] (-0.7,-1.5) circle (0.05);
\draw[fill=red] (-0.7,1.5) circle (0.05);
\draw[fill=red] (0.7,-1.5) circle (0.05);
\draw[fill=red] (0.7,1.5) circle (0.05);
\draw[fill=red] (-1.5,-0.7) circle (0.05);
\draw[fill=red] (-1.5,0.7) circle (0.05);
\draw[fill=red] (1.5,-0.7) circle (0.05);
\draw[fill=red] (1.5,0.7) circle (0.05);
\draw[fill=red] (-2.4,-0.2) circle (0.05);
\draw[fill=red] (-2.4,-0.7) circle (0.05);
\draw[fill=red] (-2.4,-1.2) circle (0.05);
\draw[fill=red] (-2.4,0.2) circle (0.05);
\draw[fill=red] (-2.4,0.7) circle (0.05);
\draw[fill=red] (-2.4,1.2) circle (0.05);
\draw[fill=red] (2.4,-0.2) circle (0.05);
\draw[fill=red] (2.4,-0.7) circle (0.05);
\draw[fill=red] (2.4,-1.2) circle (0.05);
\draw[fill=red] (2.4,0.2) circle (0.05);
\draw[fill=red] (2.4,0.7) circle (0.05);
\draw[fill=red] (2.4,1.2) circle (0.05);
\draw[fill=red] (-0.2,-2.4) circle (0.05);
\draw[fill=red] (-0.7,-2.4) circle (0.05);
\draw[fill=red] (-1.2,-2.4) circle (0.05);
\draw[fill=red] (0.2,-2.4) circle (0.05);
\draw[fill=red] (0.7,-2.4) circle (0.05);
\draw[fill=red] (1.2,-2.4) circle (0.05);
\draw[fill=red] (-0.2,2.4) circle (0.05);
\draw[fill=red] (-0.7,2.4) circle (0.05);
\draw[fill=red] (-1.2,2.4) circle (0.05);
\draw[fill=red] (0.2,2.4) circle (0.05);
\draw[fill=red] (0.7,2.4) circle (0.05);
\draw[fill=red] (1.2,2.4) circle (0.05);
\end{tikzpicture}
\vspace{-10pt}
\caption{A 3-volcano of depth 2.}\label{figure:topview}
\end{figure}

We have defined volcanoes in purely graph-theoretic terms, but we are specifically interested in volcanoes that arise as components of graphs of isogenies between elliptic curves.
Our first objective is to understand how and why volcanoes arise in such graphs.  
The definitive work in this area was done by David Kohel, whose thesis explicates the structure of isogeny graphs of elliptic curves over finite fields~ \cite{Kohel:thesis}.
The term ``volcano'' came later, in work by Fouquet and Morain \cite{Fouquet:thesis,Fouquet:IsogenyVolcanoes} that popularized Kohel's work and gave one of the first examples of how isogeny volcanoes could be exploited by algorithms that work with elliptic curves.

This leads to our second objective: to show how isogeny volcanoes can be used to develop better algorithms.
We illustrate this with four examples of algorithms that use isogeny volcanoes to solve some standard computational problems related to elliptic curves over finite fields.
In each case, the isogeny-volcano approach yields a substantial practical and asymptotic improvement over the best previous results.

\section{Isogeny graphs of elliptic curves}
We begin by recalling some basic facts about elliptic curves and isogenies, all of which can be found in standard references such as \cite{Lang:EllipticFunctions,Silverman:EllipticCurves1,Silverman:EllipticCurves2}.

\subsection{Elliptic curves}
Let $k$ be a field.  An elliptic curve $E/k$ is a smooth projective curve of genus 1, together with a distinguished $k$-rational point~$0$.
If $k'/k$ is any field extension, the set $E(k')$ of $k'$-rational points of $E$ forms an abelian group with~$0$ as its identity element.
For convenience we assume that the characteristic of $k$ is not 2 or 3, in which case every elliptic curve $E/k$ can be defined as the projective closure of a short Weierstrass equation of the form
\[
Y^2 = X^3 + aX +b,
\]
where the coefficients $a,b\in k$ satisfy $4a^3+27b^2\ne 0$.
Distinct Weierstrass equations may define isomorphic curves:
the curves defined by $Y^2=X^3+a_1X+b_1$ and $Y^2=X^3+a_2X+b_2$ are isomorphic if and only if $a_2=u^4a_1$ and $b_2=u^6b_1$ for some $u$ (the isomorphism is then defined over the field $k(u)$).

Over the algebraic closure $\overline k$, we may identify the isomorphism class of an elliptic curve $E$ with its $j$-\emph{invariant}
\[
j(E)  = j(a,b) = 1728\frac{4a^3}{4a^3+27b^2},
\]
which does not depend on our choice of $a$ and $b$.
Note that while $j(E)$ lies in~$k$, it only determines the isomorphism class of $E$ over the algebraic closure $\overline k$.  Elliptic curves with the same $j$-invariant need not be isomorphic over $k$; such curves are said to be \emph{twists} of each other.
The $j$-invariants $j(0,b)=0$ and $j(a,0)=1728$ correspond to elliptic curves with extra automorphisms.
To simplify matters we will occasionally exclude these special cases from consideration.

Every $j\in k$ arises as the $j$-invariant of an elliptic curve $E/k$: the cases 0 and 1728 are addressed above, and otherwise if
\[
a = 3j(1728-j)\quad\text{and}\quad b= 2j(1728-j)^2,
\]
then $j=j(a,b)$.  There is thus a one-to-one correspondence between the field $k$ and the set of $\overline k$-isomorphism classes of elliptic curves.
This is the vertex set of the isogeny graphs that we wish to define.

\subsection{Isogenies}
An \emph{isogeny} $\varphi\colon E_1\to E_2$ is a morphism of elliptic curves, a rational map that preserves the identity.
Every nonzero isogeny induces a surjective group homomorphism from $E_1(\overline k)$ to $E_2(\overline k)$ that has a finite kernel.
Elliptic curves related by a nonzero isogeny are said to be \emph{isogenous}.

The \emph{degree} of a nonzero isogeny is its degree as a rational map (the zero isogeny has degree 0).
We call an isogeny of positive degree $n$ an $n$-\emph{isogeny}.
The kernel of an $n$-isogeny typically has cardinality $n$ (such isogenies are said to be \emph{separable}), and this is always the case when $n$ is not divisible by the characteristic of~$k$.
 We are primarily interested in isogenies of prime degree $\ell\ne\chr k$, 
and we shall only distinguish isogenies up to isomorphism, regarding isogenies $\phi$ and $\varphi$ as equivalent if $\phi=\iota_1\circ\varphi\circ\iota_2$ for some isomorphisms $\iota_1$ and $\iota_2$.

There are two important facts about isogenies that we need.
The first is that every finite subgroup of $E_1(\overline k)$ is the kernel of a separable isogeny that is uniquely determined (up to isomorphism)  \cite[Prop.~III.4.12]{Silverman:EllipticCurves1}, and this isogeny can be explicitly computed using V\'elu's algorithm \cite{Velu:Isogenies}.
The second is that every $n$-isogeny $\varphi\colon E_1\to E_2$ has a unique \emph{dual isogeny} $\hat\varphi\colon E_2\to E_1$ that~satisfies
\[
\varphi\circ\hat\varphi = \hat\varphi\circ\varphi = [n],
\]
where $[n]$ is  the \emph{multiplication-by}-$n$ \emph{map} that sends $P\in E_1(\overline k)$ to  $nP=P+\cdots+P$; see \cite[Thm.~III.6.1]{Silverman:EllipticCurves1}.
The dual isogeny $\hat\varphi$ has degree $n$, and $[n]$ has degree~$n^2$.

The kernel of the multiplication-by-$n$ map is the $n$-\emph{torsion subgroup}
\[
E[n] = \{ P\in E(\overline k): nP = 0\},
\]
and for $n$ not divisible by the characteristic of $k$ we have
\[
E[n]\simeq \Z/n\Z \times \Z/n\Z.
\]
For primes $\ell\ne\chr k$, there are $\ell+1$ cyclic subgroups in $E[\ell]$ of order $\ell$, each of which is the kernel of a separable $\ell$-isogeny.
Every $\ell$-isogeny $\varphi$ from $E$ arises in this way, since any point in the kernel of $\varphi$ also lies in the kernel of $\hat\varphi\circ\varphi=[\ell]$.

Not every $\ell$-isogeny $\varphi\colon E\to\tilde{E}$ is necessarily defined over $k$; this occurs precisely when $\ker\varphi$ is invariant under the action of the Galois group $G=\Gal(k(E[\ell])/k)$.
The Galois group acts linearly on $E[\ell]\simeq \Z/\ell\Z \times \Z/\ell\Z$, which we may view as an $\F_\ell$-vector space of dimension two in which the order~$\ell$ subgroups of $E[\ell]$ are linear subspaces.
If $G$ fixes more than two linear subspaces of a two-dimensional vector space then it must fix all of them.
This yields the following lemma.

\begin{lemma}
Let $E/k$ be an elliptic curve with $j(E)\ne 0,1728$ and and let $\ell\ne\chr k$ be a prime.
Up to isomorphism, the number of $k$-rational $\ell$-isogenies from $E$ is $0,1,2$, or $\ell+1$.
\end{lemma}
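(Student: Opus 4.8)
The plan is to translate the statement into a count of $G$-stable lines in a two-dimensional $\F_\ell$-vector space and then dispatch it with a short linear-algebra argument.

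\emph{Step 1: from isogenies to subgroups.} By the facts recalled above, every $\ell$-isogeny from $E$ has separable (since $\ell\ne\chr k$) cyclic kernel of order $\ell$ inside $E[\ell]$, each of the $\ell+1$ order-$\ell$ subgroups of $E[\ell]$ is the kernel of an $\ell$-isogeny that is unique up to isomorphism, and an $\ell$-isogeny from $E$ is $k$-rational precisely when its kernel is stable under $G=\Gal(k(E[\ell])/k)$. Because $j(E)\ne 0,1728$ we have $\Aut(E)=\{\pm 1\}$, and $[-1]$ stabilizes every subgroup of $E[\ell]$; hence if $\varphi=\iota_1\circ\varphi'\circ\iota_2$ with $\iota_2\in\Aut(E)$ then $\ker\varphi=\iota_2^{-1}(\ker\varphi')=\ker\varphi'$. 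Combined with the uniqueness statement, this shows that $\varphi\mapsto\ker\varphi$ is a \emph{bijection} between isomorphism classes of $\ell$-isogenies from $E$ and order-$\ell$ subgroups of $E[\ell]$, and it restricts to a bijection between isomorphism classes of $k$-rational $\ell$-isogenies from $E$ and $G$-stable lines in $E[\ell]\simeq\F_\ell^2$ (equivalently, fixed points of the induced action $G\to\mathrm{PGL}_2(\F_\ell)$ on $\mathbb{P}^1(\F_\ell)$).

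\emph{Step 2: the linear-algebra fact.} I would then show that a subgroup $G$ of $\mathrm{GL}_2(\F_\ell)$ stabilizing at least three distinct lines stabilizes all $\ell+1$ of them; it suffices to prove every such $g\in G$ is a scalar matrix. Choosing two of the stabilized lines and spanning vectors $v_1,v_2$, distinctness of the lines makes $(v_1,v_2)$ a basis and $gv_i=\lambda_i v_i$ with $\lambda_i\in\F_\ell^\times$. A third stabilized line is spanned by $av_1+bv_2$ with $a,b\ne 0$, and the requirement that $g(av_1+bv_2)=a\lambda_1v_1+b\lambda_2v_2$ be a scalar multiple of $av_1+bv_2$ forces $\lambda_1=\lambda_2$; thus $g$ is scalar and fixes every line. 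So the number of $G$-stable lines is $0$, $1$, $2$, or $\ell+1$, and by Step 1 this is exactly the number of isomorphism classes of $k$-rational $\ell$-isogenies from $E$.

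The genuinely delicate point is the bookkeeping in Step 1 — specifically the use of $j(E)\ne 0,1728$ to pin down $\Aut(E)=\{\pm 1\}$, which is what makes the passage from isogenies to kernels a bijection rather than a mere surjection; without it an automorphism of order $3$ or $4$ could identify isogenies with distinct kernels and spoil the clean list of possible counts. Step 2 is routine once one reduces to checking a single matrix.
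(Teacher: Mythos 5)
Your proposal is correct and follows essentially the same route as the paper, which derives the lemma from the observation that $k$-rationality of an $\ell$-isogeny is equivalent to Galois-stability of its kernel, a line in $E[\ell]\simeq\F_\ell^2$, together with the fact that a group fixing more than two lines in a two-dimensional space fixes all $\ell+1$ of them. You additionally spell out the role of the hypothesis $j(E)\ne 0,1728$ (namely $\Aut(E)=\{\pm 1\}$, so that the kernel map is a genuine bijection on isomorphism classes), a point the paper leaves implicit.
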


\subsection{The modular equation}
Let $j(\tau)$ be the classical modular function defined on the upper half plane $\H$.
For any $\tau\in\H$, the complex numbers $j(\tau)$ and $j(N\tau)$ are the $j$-invariants of elliptic curves defined over $\C$ that are related by an isogeny whose kernel is a cyclic group of order $N$.
The minimal polynomial $\Phi_N(Y)$ of the function $j(Nz)$ over the field $\C(j(z))$ has coefficients that are integer polynomials in~$j(z)$.
If we replace $j(z)$ with $X$ we obtain the \emph{modular polynomial} $\Phi_N\in\Z[X,Y]$, which is symmetric in $X$ and $Y$ and has degree $\ell+1$ in both variables.  It parameterizes pairs of elliptic curves over $\C$ related by a cyclic $N$-isogeny;
the \emph{modular equation} $\Phi_N(X,Y)=0$ is a canonical equation for the modular curve $Y_0(N)=\Gamma_0(N)\backslash\H$.

When $N$ is a prime $\ell$, every $N$-isogeny is cyclic, and we have
\begin{equation}\label{modeq}
\Phi_\ell\bigl(j(E_1),j(E_2)\bigr) = 0 \quad \Longleftrightarrow \quad j(E_1) \text{ and } j(E_2) \text{ are $\ell$-isogenous}.
\end{equation}
This moduli interpretation remains valid over any field.
On the RHS of (\ref{modeq}) we use $j(E_i)$ to denote the $\overline k$-isomorphism class of~$E_i$, and when we say that $j(E_1)$ and $j(E_2)$ are $\ell$-isogenous we mean that one can choose $\ell$-isogenous representatives $E_1$ and $E_2$ defined over $k$.
Over~$\C$, the choice of representatives does not matter, but over a non-algebraically closed field such as a finite field, we must choose compatible twists.  In practice this is easy to do.

\subsection{The graph of $\ell$-isogenies}
We now use the modular equation to define the graph of $\ell$-isogenies over a field $k$ of characteristic different from $\ell$.
\begin{definition}
The \emph{$\ell$-isogeny graph} $G_\ell(k)$ has vertex set $k$ and directed edges $(j_1,j_2)$ present with multiplicity equal to the multiplicity of $j_2$ as a root of $\Phi_\ell(j_1,Y)$. 
\end{definition}
The vertices of $G_\ell(k)$ are $j$-invariants, and its edges correspond to (isomorphism classes of) $\ell$-isogenies. 
Edges $(j_1,j_2)$ not incident to $0$ or $1728$ occur with the same multiplicity as $(j_2,j_2)$.
Thus the subgraph of $G_\ell(k)$ on $k\backslash\{0,1728\}$ is bi-directed, and we may view it as an undirected graph.
For any fixed $k$, the graphs $G_\ell(k)$ all have the same vertex set, but different edge sets, depending on $\ell$.
Given an elliptic curve $E/k$, we may view $j(E)$ as a vertex in any of these graphs, a fact that has many applications.

\subsection{Supersingular and ordinary components}
Over a field  of positive characteristic $p$, an elliptic curve is \emph{supersingular} if its $p$-torsion subgroup $E[p]$ is trivial; otherwise it is \emph{ordinary}.
If $E$ is supersingular, then so is any elliptic curve isogenous to $E$; it follows that $G_\ell(k)$ is composed of ordinary and supersingular components.

Every supersingular curve over $k$ can be defined over a quadratic extension of $k$'s prime field, thus every supersingular $j$-invariant in $\overline k$  lies in~$\Fptwo$ \cite[Thm.\ V.3.1]{Silverman:EllipticCurves1}.
It follows that if $E$ is supersingular, then the roots of $\Phi_\ell(j(E),Y)$ all lie in $\Fptwo$.
Thus the supersingular components of $G_\ell(\Fptwo)$ are regular graphs of degree $\ell+1$ (every vertex has out-degree $\ell+1$, vertices not adjacent or equal to $0$ or $1728$ also have in-degree $\ell+1$).

\begin{remark}[\textbf{Ramanujan graphs}]
In fact, $G_\ell(\Fptwo)$ has just one supersingular component \cite[Cor.\ 78]{Kohel:thesis}, and when $p\equiv 1\bmod 12$ it is a \emph{Ramanujan Graph} \cite{Pizer:RamanujanGraphs}, an expander graph with an essentially optimal expansion factor.  This has cryptographic applications~\cite{CGL:CryptographicExpanders}.
\end{remark}

We are primarily interested in the ordinary components of $G_\ell(k)$, since this is where we will find isogeny volcanoes.
First we need to recall some facts from the theory of complex multiplication.

\subsection{Complex multiplication}
An isogeny from an elliptic curve $E$ to itself is called an \emph{endomorphism}.
The endomorphisms of an elliptic curve $E$ form a ring $\End(E)$ in which addition and multiplication are defined via:
\[
(\phi+\varphi)(P) = \phi(P) + \varphi(P)\quad\text{ and }\quad(\phi\varphi)(P) = \phi(\varphi(P)).
\]
For any positive integer $n$, the multiplication-by-$n$ map $[n]$ lies in $\End(E)$, and we have $[n]\phi=\phi+\cdots+\phi=n\phi$ for all $\phi\in\End(E)$.
Since $[n]$ is never the zero endomorphism, it follows that $\End(E)$ contains a subring isomorphic to $\Z$, which we shall identify with $\Z$.

When $\End(E)$ is larger than $\Z$ we say that $E$ has \emph{complex multiplication} (CM), a term that arises from the fact that over the complex numbers, endomorphisms that do not lie in $\Z$ may be viewed as ``multiplication-by-$\alpha$'' maps for some algebraic integers $\alpha$.
Over a finite field $\Fq$, every elliptic curve has complex multiplication; for ordinary elliptic curves over $\Fq$, the Frobenius endomorphism that sends the point $(X,Y)$ to $(X^q,Y^q)$ is an example of an endomorphism that does not lie in $\Z$.

When $E$ has complex multiplication there are two possibilities:
\[
\End(E)\simeq \begin{cases}
\text{an order $\O$ in an imaginary quadratic field},\\
\text{an order $\O$ in a definite quaternion algebra},
\end{cases}
\]
and in either case we say that $E$ has CM by $\O$.
The second case occurs if and only if $E$ is supersingular, which is possible only in positive characteristic;
we are primarily interested in the first case.
It will be convenient to fix an isomorphism $\O\iso\End(E)$ so that we may regard elements of $\O$ as elements of $\End(E)$ and vice versa; this can be done canonically, as in \cite[Prop.~II.1.1]{Silverman:EllipticCurves2}.

The \emph{endomorphism algebra} $\End^0(E)=\End(E)\otimes\Q$ is preserved by nonzero isogenies.
Thus if $E$ has complex multiplication, then so does every elliptic curve isogenous~$E$, but not necessarily by the same order $\O$.

\subsection{Horizontal and vertical isogenies}
Let $\varphi\colon E_1\to E_2$ by an $\ell$-isogeny of elliptic curves with CM by imaginary quadratic orders $\O_1$ and $\O_2$ respectively.
Then $\O_1=\Z+\tau_1\Z$ and $\O_2=\Z+\tau_2\Z$, for some $\tau_1,\tau_2\in\H$.
The isogeny $\hat\varphi\circ\tau_2\circ\varphi$ lies in $\End(E_1)$, and this implies that $\ell\tau_2\in\O_1$;
similarly, $\ell\tau_1\in\O_2$.
There are thus three possibilities:
\begin{enumerate}
\item $\O_1=\O_2$, in which case $\varphi$ is \emph{horizontal};
\item $[\O_1:\O_2] = \ell$, in which case $\varphi$ is \emph{descending};
\item $[\O_2:\O_1] = \ell$, in which case $\varphi$ is \emph{ascending}.
\end{enumerate}
In the last two cases we say that $\varphi$ is a \emph{vertical} $\ell$-isogeny.
The orders $\O_1$ and $\O_2$ necessarily have the same fraction field $K=\End^0(E_1)=\End^0(E_2)$, and both lie in the maximal order $\O_K$, the ring of integers of $K$.

\subsection{The CM torsor}
Let $E/k$ be an elliptic curve with CM by an imaginary quadratic order $\O$, and let $\fa$ be an invertible $\O$-ideal.
The $\fa$-\emph{torsion subgroup}
\[
E[\fa] = \{P\in E(\overline k):\alpha(P)=0 \text{ for all } \alpha\in\fa\}
\]
is the kernel of a separable isogeny $\varphi_\fa\colon E\to E'$.
Provided that $\fa$ has norm not divisible by the characteristic of $k$, we have $\deg \varphi_\fa = N(\fa)=[\O:\fa]$.
Because $\fa$ is invertible, we must have $\End(E)\simeq \End(E')$; thus $\varphi_\fa$ is a horizontal isogeny.

If $\fa$ and $\fb$ are two invertible $\O$-ideals then $\varphi_{\fa\fb} = \varphi_{\fa}\varphi_{\fb}$.
Thus the group of invertible $\O$-ideals acts on the set of elliptic curves with endomorphism ring $\O$.
When $\fa$ is a principal ideal, we have $E\simeq E'$, thus there is an induced action of the ideal class group $\cl(\O)$ on the set
\[
\Ell_\O(k) = \{j(E): E/k \text{ with } \End(E)\simeq \O\}.
\]
This action is faithful (only principal ideals act trivially), and transitive (see \cite[Prop.~II.1.2]{Silverman:EllipticCurves2} for a proof in the case that $k=\C$ and $\O=\O_K$, which may be generalized via \cite[Ch.~10,13]{Lang:EllipticFunctions}).
Provided it is non-empty, the set $\Ell_\O(k)$ is thus a principal homogeneous space, a \emph{torsor}, for the group $\cl(\O)$.
The cardinality of $\Ell_\O(k)$ is either 0 or $h$, where $h=h(\O)=\#\cl(\O)$ is the \emph{class number}.  Thus either every curve $E/\overline k$ with CM by $\O$ is defined over $k$, or none of them are.

\begin{remark}[\textbf{Decomposing isogenies}]\label{remark:decomposing}
The  CM action allows us to express horizontal isogenies $\varphi_\fa$ of large degree as the composition of a sequence of isogenies of smaller degree.
Even if $\fa$ has prime norm, we may find that $[\fa]=[\fp_1\cdots\fp_s]$ in~$\cl(\O)$, where the~$\fp_i$ are prime ideals with norms smaller than $\fa$.
Under the generalized Riemann hypothesis (GRH), we can find, in probabilistic subexponential time, an equivalence $[\fa]=[\fp_1\cdots\fp_s]$ in which the $\fp_i$ have norms that are polylogarithmic in the class number $h$ and $s=O(\log h)$; see \cite[Thm.\ 2.1]{Childs:QuantumIsogenies}.
This makes horizontal isogenies asymptotically easier to compute than vertical isogenies (this holds even without the GRH), which has implications for cryptography;
see \cite{BCL:LargeIsogeny,Galbraith:Isogenies,Galbraith:GHSattack,GalbraithStolbunov:IsogenySmoothing, Jao:DLOG1,Jao:LargeIsogenies}.
\end{remark}

\subsection{Horizontal isogenies}
Every horizontal $\ell$-isogeny $\varphi$ arises from the action of an invertible $\O$-ideal $\fl$ of norm $\ell$, namely, the ideal of endomorphisms $\alpha\in\O$ whose kernels contain the kernel of $\varphi$.
If $\ell$ divides the index of $\O$ in the maximal order~$\O_K$ of its fraction field $K$, then no such ideals exist.
Otherwise we say that~$\O$ is \emph{maximal at} $\ell$, and there are then exactly
\[
1 + \left(\frac{\disc(K)}{\ell}\right) = \begin{cases}
0\qquad\text{$\ell$ is inert in $K$},\\
1\qquad\text{$\ell$ is ramified in $K$},\\
2\qquad\text{$\ell$ splits in $K$},
\end{cases}
\]
invertible $\O$-ideals of norm $\ell$, each of which gives rise to a horizontal $\ell$-isogeny.
In the split case we have $(\ell)=\fl\cdot\bar\fl$, and the $\fl$-orbits partition $\Ell_\O(k)$ into cycles corresponding to the cosets of $\langle[\fl]\rangle$ in $\cl(\O)$.
When $\fl$ is principal the ideal class $[\fl]$ is trivial, which leads to self-loops in $G_\ell(k)$.
We can also have $[\fl]=[\bar\fl]$ even though $\fl\ne\bar\fl$, which gives rise to double-edges in $G_\ell(k)$.

\subsection{Vertical isogenies}\label{subsection:vertical}
Let $\O$ be an imaginary quadratic order with discriminant $D$, and let $\O'=\Z+\ell\O$ be the order of index $\ell$ in $\O$.
To simplify matters, let us assume that $\O$ and $\O'$ have the same group of units $\{\pm 1\}$; this holds whenever $D<-4$, and excludes only the cases $\O=\Z[\zeta_3]$ and $\O=\Z[i]$, which correspond to the special $j$-invariants 0 and 1728 respectively.

The map that sends each invertible $\O'$-ideal $\fa$ to the invertible $\O$-ideal $\fa\O$ preserves norms and induces a surjective homomorphism
\[
\rho\colon\cl(\O')\to\cl(\O).
\]
See \cite[Prop.\ 7.20]{Cox:ComplexMultiplication} for a proof in the case that $\O$ is the maximal order; the general case is proved similarly (cf. \cite[Lem.\ 3]{BissonSutherland:Endomorphism} and \cite[\S 3]{BrokerLauterSutherland:CRTModPoly}).
Under a suitable identification of the class groups $\cl(\O')$ and $\cl(\O)$ with their torsors $\Ell_{\O'}(k)$ and $\Ell_{\O}(k)$, the vertical isogenies from $\Ell_{\O'}(k)$ to $\Ell_{\O}(k)$ correspond to the map from $\cl(\O')$ to $\cl(\O)$ given by $\rho$.
To show this, let us prove the following lemma.

\begin{lemma}\label{lemma:ascending}
Let $E'/k$ be an elliptic curve with CM by $\O'$.
Then there is a unique ascending $\ell$-isogeny from $E'$ to an elliptic curve $E/k$ with CM by $\O$.
\end{lemma}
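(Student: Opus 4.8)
The plan is to study the action of $\End(E')=\O'$ on the $\ell$-torsion $E'[\ell]$. First, since $\O'=\Z+\ell\O$ has conductor divisible by $\ell$, it is not maximal at $\ell$, so by the discussion of horizontal isogenies above there are no horizontal $\ell$-isogenies from $E'$; hence each of the $\ell+1$ $\ell$-isogenies from $E'$ is vertical. Moreover $\O$ is the unique order containing $\O'$ with index $\ell$, so by the trichotomy (horizontal/ascending/descending) an $\ell$-isogeny $\varphi\colon E'\to E$ is ascending if and only if $\End(E)=\O$. Thus the lemma reduces to showing that exactly one of the $\ell+1$ isogenies from $E'$ has a target with endomorphism ring $\O$.

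Next I would pin down the module structure. Write $\O=\Z[\omega]$ with $\omega^2=t\omega-n$ for integers $t,n$; then $\tau:=\ell\omega$ generates $\O'$ over $\Z$ and satisfies $\tau^2=\ell t\,\tau-\ell^2 n$, so reduction mod $\ell$ gives a ring isomorphism $\O'/\ell\O'\cong\F_\ell[\epsilon]/(\epsilon^2)$ carrying $\bar\tau$ to $\epsilon$. Because $\chr k\ne\ell$, an endomorphism of $E'$ vanishing on $E'[\ell]$ factors through $[\ell]$, so $E'[\ell]$ is a faithful module of $\F_\ell$-dimension $2$ over the local ring $\F_\ell[\epsilon]/(\epsilon^2)$; the only such module is the free one of rank $1$ (the other module of that dimension, on which $\epsilon$ acts as $0$, is not faithful). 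Consequently, among the $\ell+1$ subgroups of $E'[\ell]$ of order $\ell$, exactly one --- call it $C_0$ --- is stable under $\O'$: it is the unique proper nonzero submodule $C_0=\tau\,E'[\ell]$.

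The heart of the argument is then the equivalence: $\varphi_C\colon E'\to E'/C$ is ascending $\Longleftrightarrow$ $C$ is $\O'$-stable $\Longleftrightarrow$ $C=C_0$. If $C$ is $\O'$-stable, every $\alpha\in\O'=\End(E')$ descends to an endomorphism of $E'/C$; since a nonzero endomorphism is surjective and so cannot have image in the finite group $C$, this descent map is an injective ring homomorphism $\O'\hookrightarrow\End(E'/C)$, and $\End(E'/C)$ is an order in the imaginary quadratic field $K=\End^0(E')$ containing $\O'$. The trichotomy applied to $\varphi_C$ (with the descending case ruled out because $\End(E'/C)\supseteq\O'$) then forces $\End(E'/C)\in\{\O',\O\}$, and $\O'$ is impossible since there are no horizontal $\ell$-isogenies; so $\End(E'/C)=\O$, i.e.\ $\varphi_C$ is ascending. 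Conversely, if $\varphi_C$ is ascending then $\O'\subseteq\O=\End(E'/C)$, and for each $\alpha\in\O'$ the canonical identifications of $\End^0(E')$ and $\End^0(E'/C)$ with $K$ (see \cite[Prop.~II.1.1]{Silverman:EllipticCurves2}) intertwine the endomorphism ``$\alpha$'' of $E'$ with that of $E'/C$ via $\varphi_C$, forcing $\alpha(C)\subseteq C$; thus $C$ is $\O'$-stable, so $C=C_0$. This proves both existence and uniqueness. Finally, $C_0$ is intrinsically attached to $E'$ --- it is $\tau\,E'[\ell]$ for any generator $\tau$ of $\O'$, and also the unique order-$\ell$ subgroup $C$ for which $\varphi_C$ is ascending --- so it is stable under $\Gal(\bar k/k)$, and hence $E:=E'/C_0$ and the isogeny $E'\to E$ are defined over $k$.

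The step I expect to require the most care is the compatibility input used in the converse direction: that pushing an endomorphism of $E'$ forward along $\varphi_C$ produces precisely the endomorphism of $E'/C$ corresponding to the same element of $K$, so that ``$\varphi_C$ ascending'' translates into the clean combinatorial condition that $C$ be stable under all of $\O'$. Two subsidiary points also need attention: that $\End(E'/C)$ is an order in an imaginary quadratic field, so the trichotomy applies --- this holds because $E'/C$ is isogenous to $E'$, which is not supersingular --- and that $\O'/\ell\O'$ really is the ring of dual numbers, which uses that $\ell$ divides the conductor of $\O'$. (A more hands-on route to existence is to observe that $\tau=\ell\omega$ descends to $\bar\tau\in\End(E'/C_0)$ because $\tau(C_0)=0$, that the relation $\tau^2=\ell t\,\tau-\ell^2 n$ forces $(E'/C_0)[\ell]\subseteq\ker\bar\tau$, hence $\bar\tau$ factors through $[\ell]$ and $\omega\in\End(E'/C_0)$; over $\C$ the ascending isogeny is simply the quotient map $\C/\O'\to\C/\O$.)
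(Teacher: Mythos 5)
Your proof is correct, but it takes a genuinely different route from the paper's. The paper argues globally: it first uses the $\cl(\O')$-torsor structure of $\Ell_{\O'}(k)$ to show that if one curve with CM by $\O'$ admits an ascending $\ell$-isogeny then they all do, then counts descending isogenies out of $\Ell_\O(k)$ (there are $\ell-\inkron{D_K}{\ell}$ or $\ell$ per vertex, depending on whether $\O$ is maximal at $\ell$, handled by induction on $\nu_\ell([\O_K:\O])$), observes that their duals are ascending, and invokes the class number relation $h(\O')=(\ell-\inkron{D_K}{\ell})\,h(\O)$ from Cox to conclude by pigeonhole that each curve gets exactly one. Your argument is local and curve-by-curve: you identify $E'[\ell]$ as a faithful, hence free rank-one, module over $\O'/\ell\O'\cong\F_\ell[\epsilon]/(\epsilon^2)$, so that it has a unique proper nonzero submodule $C_0=\tau E'[\ell]$, and you show that the ascending isogenies are exactly those whose kernel is $\O'$-stable. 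This avoids the class number formula and the induction entirely, is closer in spirit to Kohel's original kernel-ideal analysis, and has the practical merit of exhibiting the kernel explicitly; the price is that you must invoke the intertwining property of the normalized identifications $\O\iso\End(E)$ with isogenies for the converse direction (which you correctly flag as the delicate input), whereas the paper's counting argument sidesteps it here, at the cost of relying on the torsor machinery and on the assertion that all $\ell+1$ isogenies from a curve in $\Ell_\O(k)$ are $k$-rational in the base case. Your closing Galois-stability argument for $k$-rationality of $E'\to E'/C_0$ is also a necessary step that the paper's construction gets for free by working with $k$-rational isogenies throughout, and you handle it correctly.
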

\begin{proof}
The existence of $E'/k$ implies that $\Ell_{\O'}(k)$ is nonempty, and
since $\O$ contains $\O'$, it follows that $\Ell_{\O}(k)$ is also nonempty.\footnote{One way to see this is to note that $k$ contains all the roots of the Hilbert class polynomial for~$\O'$, hence it must contain all the roots of the Hilbert class polynomial for $\O$, since the ring class field of $\O'$ contains the ring class field of $\O$; see \S\ref{subsection:Hilbert}.}

Let us suppose that there exists an ascending $\ell$-isogeny $\phi_1\colon E_1'\to E_1$, for some elliptic curve $E_1'$ with CM by $\O'$.
Twisting $E_1$ if necessary, we may choose an invertible $\O'$-ideal $\fa'$ so that the horizontal isogeny $\varphi_{\fa'}$ maps $E_1'$ to $E'$.  If we now set $\fa=\rho(\fa')$ and let $E$ be the image of $\varphi_{\fa}\circ\phi_1$, then $E$ has CM by $\O$, and there is a unique isogeny $\phi\colon E'\to E$ such that $\phi\circ\varphi_{\fa'} = \varphi_{\fa}\circ\phi_1$, by \cite[Cor.\ 4.11]{Silverman:EllipticCurves1}.  We have $\deg \phi = \deg\varphi_{\fa}\deg\phi_1 / \deg\varphi_{\fa'} = \ell$, thus $\phi$ is an ascending $\ell$-isogeny.
It follows that if any elliptic curve $E_1'/k$ with CM by $\O'$ admits an ascending $\ell$-isogeny, then so does every such elliptic curve.

We now proceed by induction on $d=\nu_\ell([\O_K:\O])$.  Let $D_K=\disc(K)$.
For $d=0$, every elliptic curve $E/k$ with CM by $\O$ admits $\ell+1$ $k$-rational $\ell$-isogenies, of which $1+\inkron{D_K}{\ell}$ are horizontal.  The remaining $\ell-\inkron{D_K}{\ell} > 0$ must be descending, and their duals are ascending $\ell$-isogenies from elliptic curves with CM by~$\O'$.  It follows that there are a total of $(\ell-\inkron{D_K}{\ell})h(\O)$ ascending $\ell$-isogenies from $\Ell_{\O'}(k)$ to $\Ell_{\O}(k)$.  By  \cite[Thm.\ 7.24]{Cox:ComplexMultiplication}, this is equal to the cardinality $h(\O')$ of $\Ell_{\O'}(k)$.
Since there is at least one ascending $\ell$-isogeny from each elliptic curve $E'/k$ with CM by $\O'$, there must be exactly one in each case.

The argument for $d>0$ is similar.  By the inductive hypothesis, every elliptic curve $E/k$ with CM by $\O$ admits exactly one ascending $\ell$-isogeny, and since $\ell$ now divides $[\O_K:\O]$, there are no horizontal isogenies from $E$, and all $\ell$ of the remaining $\ell$-isogenies from $E$ must by descending.  There are thus a total of $\ell h(\O)$ ascending $\ell$-isogenies from $\Ell_{\O'}(k)$, which equals the cardinality $h(\O')$ of $\Ell_{\O'}(k)$.
\end{proof}

It follows from the proof of Lemma 5 that there is a one-to-one correspondence between the graph of the function $\rho$ and the edges of $G_\ell(k)$ that lead from $\Ell_{\O'}(k)$ to $\Ell_{\O}(k)$.  Indeed, let us pick a vertex $j_1'\in\Ell_{\O'}(k)$ and let $j_1$ be its unique neighbor in $\Ell_{\O}(k)$ given by Lemma~\ref{lemma:ascending}.  If we identify the edge $(j_1',j_1)$ in $G_\ell(k)$ with the edge $(1_{\cl(\O')},1_{\cl(\O)})$ in the graph of $\rho$, then every other edge in the correspondence is determined in a way that is compatible with the actions of $\cl(O')$ and $\cl(O)$ on the torsors $\Ell_{\O'}(k)$ and $\Ell_{\O}(k)$.
Under this correspondence, the vertices in $\Ell_{\O'}(k)$ that are connected to a given vertex $v$ in $\Ell_{\O}(k)$ (the \emph{children} of $v$) correspond to a coset of the kernel of $\rho$,  a cyclic group of order $\ell-\inkron{D_K}{\ell}$ generated by the class of an invertible $\O'$-ideal of norm~$\ell^2$; see \cite[Lem.\ 3.2]{BrokerLauterSutherland:CRTModPoly}.

\subsection{Ordinary elliptic curves over finite fields}
We now assume that $k$ is a finite field $\Fq$.
Let $E/\Fq$ be an ordinary elliptic curve and let $\pi_E$ denote the Frobenius endomorphism $(X,Y)\mapsto(X^q,Y^q)$.
The \emph{trace of Frobenius} is given by
\[
t = \tr \pi_E = q+1 - \#E(\Fq),
\]
and $\pi_E$ satisfies the characteristic equation $\pi_E^2-t\pi_E+q=0$.
As an element of the imaginary quadratic order $\O\simeq\End(E)$, the Frobenius endomorphism corresponds to an algebraic integer with trace $t$ and norm $q$.  Thus we have the \emph{norm equation}
\[ 
4q = t^2-v^2 D_K.
\]
in which $D_K$ is the discriminant of the field $K=\Q(\sqrt{t^2-4q})$ containing $\O$, and $v=[\O_K:\Z[\pi_E]]$.
We have
\[
\Z[\pi_E]\subseteq\O\subseteq\O_K,
\]
thus $[\O_K:\O]$ divides $v$, and the same is true for any elliptic curve $E/\Fq$ with Frobenius trace $t$.

Let us now define
\[
\Ell_t(\Fq) = \{j(E): E/\Fq \text{ satisfies } \tr\pi_E=t\},
\]
the set of $\overline\F_p$-isomorphism classes of elliptic curves over $\Fp$ with a given Frobenius trace $t$.
By a theorem of Tate \cite{Tate:IsogenyTheorem}, $\Ell_t(\Fq)$ corresponds to an isogeny class, but note that $\Ell_t(\Fq)=\Ell_{-t}(\Fq)$.  For any ordinary elliptic curve $E/\Fq$ with Frobenius trace $t=\tr \pi_E$, we may write $\Ell_t(\Fq)$ as the disjoint union
\[
\Ell_t(\Fq) = \bigsqcup_{\Z[\pi_E]\subseteq\O\subseteq\O_K}\Ell_\O(\Fq),
\]
of cardinality equal to the Kronecker class number $H(t^2-4q)$; see \cite[Def.\ 2.1]{Schoof:PlaneCubicCurves}.

\subsection{The main theorem}
We now arrive at our main theorem, which states that, except for the components of $0$ and $1728$, the ordinary components of $G_\ell(\Fq)$ are $\ell$-volcanoes, and precisely characterizes their structure.
The proof follows easily from the material we have presented, as  the reader may wish to verify.
\smallskip

\begin{theorem}[\textbf{Kohel}]\label{thm:Kohel}
Let $V$ be an ordinary component of $G_\ell(\Fq)$ that does not contain $0$ or $1728$.
Then $V$ is an $\ell$-volcano for which the following hold:
\begin{enumerate}[{\rm (i)}]
\item The vertices in level $V_i$ all have the same endomorphism ring $\O_i$.
\item The subgraph on $V_0$ has degree $1+\inkron{D_0}{\ell}$, where $D_0=\disc(\O_0)$.
\item If $\inkron{D_0}{\ell}\ge 0$, then $|V_0|$ is the order of $[\fl]$ in $\cl(\O_0)$; otherwise $|V_0|=1$.
\item The depth of $V$ is $d=\nu_\ell\left((t^2-4q)/D_0\right)/2$, where $t^2=(\tr\pi_E)^2$ for $j(E)\in V$.
\item $\ell\ndiv [\O_K:\O_0]$ and $[\O_i:\O_{i+1}]=\ell$ for $0\le i < d$.
\end{enumerate}
\end{theorem}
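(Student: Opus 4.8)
The plan is to stratify the vertices of $V$ by their endomorphism ring, check that the resulting levels satisfy the $\ell$-volcano axioms, and read the invariants (i)--(v) off the local structure at each level. First I would invoke the classification of $\ell$-isogenies between imaginary-quadratic CM curves as horizontal, ascending, or descending: along every edge of $V$ the endomorphism ring is preserved or changes by index $\ell$, and the CM field $K$ is constant. Since $V$ is connected, the orders $\O$ occurring as endomorphism rings of its vertices all share a prime-to-$\ell$ conductor $u$ and are totally ordered by the $\ell$-adic valuation of their conductor. Writing $\O_i$ for the order of conductor $\ell^i u$ and setting $V_i=\{\,j(E)\in V:\End(E)\simeq\O_i\,\}$ gives (i) at once and makes (v) immediate (consecutive $\O_i$ have index $\ell$, and $\O_0$ is maximal at $\ell$ by construction). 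Moreover every vertex of $V$ lies in a single $\Fq$-isogeny class $\Ell_t(\Fq)$: adjacent vertices are joined by an $\Fq$-rational isogeny by the moduli interpretation \eqref{modeq}, hence share a Frobenius trace. Thus each order $\O_i$ occurring in $V$ satisfies $\Z[\pi_E]\subseteq\O_i\subseteq\O_K$, so $\ell^i u\mid v=[\O_K:\Z[\pi_E]]$ and $i\le\nu_\ell(v)$; and with $D_0=\disc(\O_0)=u^2D_K$ one computes $(t^2-4q)/D_0=(v/u)^2$, so $\nu_\ell\big((t^2-4q)/D_0\big)=2\nu_\ell(v)$. Hence, once every level $V_0,\dots,V_{\nu_\ell(v)}$ is shown non-empty, the depth is $d=\nu_\ell(v)=\nu_\ell\big((t^2-4q)/D_0\big)/2$, which is (iv).

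Next I would determine the edges out of a fixed vertex $j(E)\in V_i$. Because $E$ is ordinary and $j(E)\ne 0,1728$ (so $\Aut E=\{\pm1\}$), the out-neighbours of $j(E)$ in $G_\ell(\Fq)$, with multiplicity, are exactly the $j$-invariants $j(E/C)\in\Fq$ as $C$ ranges over the $\ell+1$ order-$\ell$ subgroups of $E[\ell]$, and by the trichotomy each $j(E/C)$ lies in $V_{i-1}$, $V_i$, or $V_{i+1}$. Lemma~\ref{lemma:ascending} gives exactly one edge into $V_{i-1}$ when $i>0$ — of multiplicity one, since a second $\ell$-isogeny from $E$ to that vertex would be a second ascending isogeny — and none when $i=0$ (a maximal-at-$\ell$ order has no over-order of index $\ell$). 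The analysis of horizontal isogenies gives $1+\inkron{D_0}{\ell}$ edges inside $V_i$ when $i=0$ and none otherwise, where $\inkron{D_0}{\ell}=\inkron{D_K}{\ell}$ because $\ell\nmid u$. The remaining neighbours lie in $V_{i+1}$; to count them I would argue globally, as in the proof of Lemma~\ref{lemma:ascending}: for $0\le i<d$ the descending $\ell$-isogenies from $\Ell_{\O_{i+1}}(\Fq)$ to $\Ell_{\O_i}(\Fq)$ are the duals of the ascending ones, so their number is $h(\O_{i+1})=\#\Ell_{\O_{i+1}}(\Fq)$, which by \cite[Thm.~7.24]{Cox:ComplexMultiplication} equals $\big(\ell-\inkron{D_0}{\ell}\big)h(\O_0)$ if $i=0$ and $\ell\,h(\O_i)$ if $i>0$. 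Since each level-$(i{+}1)$ vertex carries exactly one ascending edge, this forces each level-$i$ vertex to carry exactly $\ell-\inkron{D_0}{\ell}$ (resp.\ $\ell$) descending edges, and shows $V_{i+1}\ne\emptyset$ whenever $V_i\ne\emptyset$. Together with $V_0\ne\emptyset$ — from any vertex, repeatedly following the unique ascending edge of Lemma~\ref{lemma:ascending} reaches level $0$ after finitely many steps — every level $V_0,\dots,V_d$ is non-empty, completing (iv). Finally, a level-$d$ vertex has no descending edge in $G_\ell(\Fq)$: if $j(E/C)$ were in $\Fq$ it would lie in $V$, hence in $\Ell_t(\Fq)$, but $\End(E/C)\simeq\O_{d+1}$ has conductor $\ell^{d+1}u\nmid v$, a contradiction.

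It remains to assemble the volcano axioms and finish (ii) and (iii). The subgraph on $V_0$ is regular of degree $1+\inkron{D_0}{\ell}\le 2$, which is volcano axiom (i) and Theorem (ii). For $i>0$, each vertex of $V_i$ has exactly one neighbour in $V_{i-1}$, namely its unique ascending edge, and, by the trichotomy, every edge of $V$ other than the $1+\inkron{D_0}{\ell}$ horizontal edges on the surface arises this way from its deeper endpoint — volcano axiom (ii). For $i<d$, every vertex has degree $\big(1+\inkron{D_0}{\ell}\big)+\big(\ell-\inkron{D_0}{\ell}\big)=\ell+1$ on the surface and $1+\ell=\ell+1$ below it — volcano axiom (iii). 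For Theorem (iii), the horizontal edges on $V_0$ are the orbits of the action on $\Ell_{\O_0}(\Fq)$ of the invertible $\O_0$-ideals of norm $\ell$: if $\inkron{D_0}{\ell}=-1$ there are none, so $V_0$ is a single vertex; if $\inkron{D_0}{\ell}\ge 0$ then, since $V_0$ is connected, the surface is a single orbit of $\langle[\fl]\rangle$, so $|V_0|$ is the order of $[\fl]$ in $\cl(\O_0)$. The step I expect to require the most care is the passage from the $\Fpbar$-picture (where the $\ell+1$ subgroups of $E[\ell]$ would yield an infinite descending tree) to $G_\ell(\Fq)$: that $E/C$ contributes an edge exactly when $j(E/C)\in\Fq$, that this holds for every horizontal, ascending, or descending target at an admissible level (because $\Ell_\O(\Fq)$ is either empty or all of $\Ell_\O(\Fpbar)$, and the admissible orders do occur, filling out $\Ell_t(\Fq)$), and that it fails for descending targets below the floor — which is exactly the mechanism producing finite depth $d$ and degree $1$, not $\ell+1$, at the floor.
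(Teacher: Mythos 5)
Your proposal is correct and is precisely the argument the paper intends: the paper gives no explicit proof of Theorem~\ref{thm:Kohel}, stating only that it "follows easily from the material we have presented," and your write-up assembles exactly that material (the horizontal/ascending/descending trichotomy, Lemma~\ref{lemma:ascending} together with its counting argument via \cite[Thm.~7.24]{Cox:ComplexMultiplication}, the ideal-class action on $\Ell_{\O_0}(\Fq)$, and the norm equation) in the intended way. The only points deserving one more sentence each are the connectedness of $V_0$ (which follows because the unique-ascending-edge property makes the subgraph below the surface a forest) and the fact that $\#\Ell_{\O_i}(\Fq)=h(\O_i)$ for every admissible order, which comes from the Kronecker class number decomposition of $\Ell_t(\Fq)$.
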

\smallskip

\begin{remark}[\textbf{Special cases}]\label{remark:special}
Theorem~\ref{thm:Kohel} is easily extended to the case where $V$ contains $0$ or $1728$.
Parts (i)-(v) still hold, the only necessary modification is the claim that $V$ is an $\ell$-volcano.
When $V$ contains $0$, if $V_1$ is non-empty then it contains $\frac{1}{3}\bigl(\ell-\inkron{-3}{\ell}\bigr)$ vertices, and each vertex in $V_1$ has three incoming edges from~$0$ but only one outgoing edge to $0$.
When $V$ contains $1728$, if $V_1$ is non-empty then it contains $\frac{1}{2}\bigl(\ell-\inkron{-1}{\ell}\bigr)$ vertices,  and each vertex in $V_1$ has two incoming edges from $1728$ but only one outgoing edge to $1728$.
This 3-to-1 (resp.\ 2-to-1) discrepancy arises from the action of $\Aut(E)$ on the cyclic subgroups of $E[\ell]$ when $j(E)=0$ (resp.\ 1728).
Otherwise, $V$ satisfies all the requirements of an $\ell$-volcano, and most of the algorithms we present in the next section are equally applicable to $V$.
\end{remark}

\begin{example}
Let $p=411751$ and $\ell=3$.
The graph $G_3(\Fp)$ has a total of $206254$ components, of which $205911$ are ordinary and $343$ are supersingular.
The supersingular components all lie in the same isogeny class (which is connected in $G_3(\Fptwo)$), while the ordinary components lie in $1283$ distinct isogeny classes.

Let us consider the isogeny class $\Ell_t(\Fp)$ for $t=52$.
We then have $4p=t^2-v^2D$ with $v=2\cdot 3^2\cdot 5$ and $D=-203$.
The subgraph $G_{\ell,t}(\Fp)$ of $G_\ell(\Fp)$ on $\Ell_t(\Fp)$ (also known as a \emph{cordillera} \cite{Miret:Cordillera}), consists of ten $3$-volcanoes, all of which have depth $d=\nu_\ell(v)=2$.
It contains a total $1008$ vertices distributed as follows:
\begin{itemize}
\item $648$ vertices lie in six 3-volcanoes with $[\O_K:\O_0]=10$ and $|V_0|=12$.
\item $216$ vertices lie in two 3-volcanoes with $[\O_K:\O_0]=5$ and $|V_0|=12$.
\item $108$ vertices lie in a  3-volcano with $[\O_K:\O_0]=2$ and $|V_0|=12$.
\item $36$ vertices lie in a 3-volcano with $[\O_K:\O_0]=1$ and $|V_0|=4$.
\end{itemize}
For comparison:
\begin{itemize}
\item $G_{2,52}(\Fp)$ consists of $252$ 2-volcanoes of depth 1 with $|V_0|=1$.
\item $G_{5,52}(\Fp)$ consists of $144$ 5-volcanoes of depth 1 with $|V_0|=1$.
\item $G_{7,52}(\Fp)$ consists of  $504$ 7-volcanoes with two vertices and one edge.
\item $G_{11,52}(\Fp)$ consists of $1008$ 11-volcanoes that are all isolated vertices.
\end{itemize}
\end{example}

\section{Applications}
We now consider several applications of isogeny volcanoes, starting with one that is very simple, but nevertheless instructive.

\subsection{Finding the floor}
Let $E/\Fq$ be an ordinary elliptic curve.
Then $j(E)$ lies in an ordinary component $V$ of $G_\ell(\Fq)$.
We wish to find a vertex on the \emph{floor} of $V$, that is, a vertex~$v$ in level $V_d$, where~$d$ is the depth of $V$.
Such vertices $v$ are easily distinguished by their (out-) degree, which is the number of roots of $\Phi_\ell(v,Y)$ that lie in $\Fq$ (counted with multiplicity).

\begin{proposition}
Let $v$ be a vertex in an ordinary component $V$ of depth $d$ in $G_\ell(\Fq)$.
Either  $\deg v\le 2$ and $v\in V_d$, or $\deg v = \ell+1$ and $v\not\in V_d$.
\end{proposition}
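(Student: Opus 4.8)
The plan is to derive the proposition directly from Theorem~\ref{thm:Kohel} together with the definition of an $\ell$-volcano, so the proof is really just a careful case split on whether $v$ lies on the floor. By Theorem~\ref{thm:Kohel}, the component $V$ is an $\ell$-volcano with levels $V_0,\dots,V_d$; if $V$ happens to contain $0$ or $1728$ then parts (i)--(v) still hold by Remark~\ref{remark:special}, and since those two $j$-invariants have CM by the maximal order of $\Q(\sqrt{-3})$, resp.\ $\Q(i)$, they lie in $V_0$, so the argument below is unaffected. It therefore suffices to prove two things: that $\deg v=\ell+1$ whenever $v\notin V_d$, and that $\deg v\le 2$ whenever $v\in V_d$. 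Since $\ell$ is prime we have $\ell+1\ge 3>2$, so these statements are mutually exclusive; hence $v\notin V_d$ is equivalent to $\deg v=\ell+1$ and $v\in V_d$ is equivalent to $\deg v\le 2$, which is exactly the stated dichotomy (and explains why the out-degree detects the floor).

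For the first statement, if $v\in V_i$ with $i<d$, then part (iii) of the definition of an $\ell$-volcano says $v$ has degree $\ell+1$, and we are done. For the second, suppose $v\in V_d$. If $d=0$ then $V=V_0$, and part (i) says the subgraph on $V_0$ is regular of degree at most $2$, so $\deg v\le 2$. If $d>0$, then $v\in V_d$ does not lie on the surface $V_0$, so by part (ii) every edge incident to $v$ runs to a single neighbor in $V_{d-1}$; since multi-edges occur only on the surface, this is a single edge and $\deg v=1$. (Equivalently: by part (v), $[\O_K:\O_d]=\ell^d[\O_K:\O_0]$ is divisible by $\ell$, so $\O_d$ is not maximal at $\ell$ and $v$ admits no horizontal $\ell$-isogeny; being on the floor, $v$ admits no descending $\ell$-isogeny either; so the unique ascending $\ell$-isogeny supplied by Lemma~\ref{lemma:ascending} is the only $\ell$-isogeny from $v$, and $\deg v=1$.) In either case $\deg v\le 2$, which completes the proof.

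Essentially every step here is a direct quotation of the structure theorem, so there is no real obstacle; the only point demanding a moment's attention is the degree count for floor vertices, where one must (a) separate the genuinely different depth-$0$ case, in which the degree may be as large as $2$ rather than exactly $1$, from the depth-$\ge 1$ case, and (b) keep track of the special vertices $0$ and $1728$ via Remark~\ref{remark:special} in case they lie in $V$. Neither requires any computation.
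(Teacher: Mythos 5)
Your proof is correct and follows the same route as the paper's: the paper likewise treats $d=0$ via regularity of the surface and otherwise observes that a floor vertex has degree $1$ while a non-floor vertex has degree $\ell+1$, exactly as in your case split on the volcano structure given by Theorem~\ref{thm:Kohel}. Your version simply spells out the details (mutual exclusivity since $\ell+1>2$, the multi-edge caveat, and the $0,1728$ special cases) that the paper leaves implicit.
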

\begin{proof}
If $d=0$ then $V=V_0=V_d$ is a regular graph of degree at most 2 and $v\in V_d$.
Otherwise, either $v\in V_d$ and $v$ has degree 1, or $v\not\in V_d$ and $v$ has degree $\ell+1$.
\end{proof}

We note that if $j(E)$ is on the floor then $E[\ell](\Fq)$ is necessarily cyclic (otherwise there would be another level below the floor).
This is useful, for example, when using the CM method to construct Edwards curves \cite{Morain:EdwardsCM}, and shows that every ordinary elliptic curve $E/\Fq$ is isogenous to some $E'/\Fq$ with $E'(\Fq)$ cyclic.

Our strategy for finding the floor is simple: if $v_0=j(E)$ is not already on the floor then we will construct a random path from $v_0$ to a vertex $v_s$ on the floor.
By a \emph{path}, we mean a sequence of vertices $v_0,v_1,\ldots,v_s$ such that each pair $(v_{i-1},v_i)$ is an edge and $v_i\ne v_{i-2}$ (so backtracking is prohibited).
\medskip

\noindent
\textbf{Algorithm} \textsc{FindFloor}\\
Given an ordinary vertex $v_0\in G_\ell(\Fq)$, find a vertex on the floor of its component.
\begin{enumerate}[1.]
\item If $\deg v_0 \le 2$ then output $v_0$ and terminate.
\item Pick a random neighbor $v_1$ of $v_0$ and set $s\leftarrow 1$.
\item While $\deg v_s > 1$: pick a random neighbor $v_{s+1}\ne v_{s-1}$ of $v_s$ and increment~$s$.
\item Output $v_s$.
\end{enumerate}
\smallskip

The complexity of \textsc{FindFloor} is given by the following proposition, in which $\M(n)$ denotes the time to multiply two $n$-bit integers.  
It is worth noting that for large $\ell$ the complexity is dominated by the time to substitute $v$ into $\Phi_\ell(X,Y)$, not by root-finding (a fact that is occasionally overlooked).

\begin{proposition}\label{prop:findfloor}.
Given $\Phi_\ell\in\Fq[X,Y]$, each step of \textsc{FindFloor} can be accomplished in \emph{$O(\ell^2\M(n) + \M(\ell n)n)$} expected time, where $n=\log q$.  The expected number of steps $s$ is $\delta+O(1)$, where $\delta$ is the distance from $v_0$ to the floor.
\end{proposition}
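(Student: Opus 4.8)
The plan is to prove the two assertions separately: a per-step cost bound that is pure finite-field arithmetic, and a step-count bound that is a random-walk analysis on the volcano. For the per-step cost, observe that executing step~1 or a pass through the loop in step~3 requires computing the univariate polynomial $f=\Phi_\ell(v_s,Y)\in\Fq[Y]$ of degree $\ell+1$, determining the number of its roots in $\Fq$ (this is $\deg v_s$, which decides whether $v_s$ lies on the floor), and, when $\deg v_s>1$, selecting one such root uniformly at random as $v_{s+1}$ subject to $v_{s+1}\ne v_{s-1}$. To form $f$ I would compute the powers $v_s,v_s^2,\dots,v_s^{\ell+1}$ with $O(\ell)$ multiplications in $\Fq$ and then combine the $O(\ell^2)$ coefficients of $\Phi_\ell$ with $O(\ell^2)$ further multiplications, giving $O(\ell^2\M(n))$. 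To find the $\Fq$-roots I would compute $g=\gcd\bigl(Y^q-Y,f\bigr)$, whose degree distinguishes floor vertices from the rest; the cost is dominated by computing $Y^q\bmod f$ by repeated squaring, using $O(n)$ multiplications in $\Fq[Y]/(f)$, each costing $O(\M(\ell n))$ via Kronecker substitution (packing a degree-$O(\ell)$ polynomial with $n$-bit coefficients into an integer of $O(\ell n)$ bits), for a total of $O(\M(\ell n)n)$; the ensuing gcd and the equal-degree splitting that extracts and samples a root stay within this bound. Adding the two contributions gives $O(\ell^2\M(n)+\M(\ell n)n)$, and, as the preceding remark observes, the first term dominates once $\ell$ is large relative to $n$; the qualifier ``expected'' is needed only because root-splitting is a Las~Vegas procedure.

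For the number of steps, the structural key is this: if the walk ever takes a descending step, landing at a vertex $v\in V_m$ with $0<m\le d$, then by property~(ii) of a volcano the vertex it came from is the \emph{unique} neighbour of $v$ in $V_{m-1}$, so the $\ell$ non-backtracking neighbours of $v$ all lie in $V_{m+1}$ and the next step is again forced to descend. Hence once the walk takes a single descending step it descends monotonically to the floor, and so $s$ equals the number of non-descending steps taken before the first descent, plus the distance to the floor at that moment. If $\deg v_0\le 2$ then $v_0\in V_d$ and $s=0=\delta$. Otherwise, writing $v_0\in V_{d-\delta}$ and letting $A$ and $H$ be the numbers of ascending and horizontal steps taken before the first descent, one has $s=\delta+2A+H$ (an ascending step is later undone by an extra descending step; a horizontal step costs one extra step). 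At every move the walk makes before its first descent, the next move is descending with probability at least $1/3$: for an interior vertex reached from below this probability is $(\ell-1)/\ell$, for the first move out of an interior $v_0$ it is $\ell/(\ell+1)$, and for moves out of a surface vertex it is at least $(\ell-\inkron{D_0}{\ell})/(\ell+1)\ge(\ell-1)/(\ell+1)\ge 1/3$ --- the one exception being a move out of a surface vertex reached from below when $\ell=2$ and $\inkron{D_0}{\ell}=1$, which is a forced horizontal step, immediately followed by a state with descent probability at least $1/2$. Thus $A+H$ is stochastically dominated by $1+\mathrm{Geom}(1/3)$, giving $\mathbb{E}[A+H]=O(1)$ and hence $\mathbb{E}[s]=\delta+O(1)$ with an absolute implied constant; the degenerate surfaces with $|V_0|\le 2$ (self-loops, double edges) contribute at most an extra $O(1)$ and are checked separately.

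I expect the delicate point to be exactly this last claim: that the $O(1)$ in $\mathbb{E}[s]=\delta+O(1)$ is genuinely independent of $\ell$, $q$, $d$, and the level of $v_0$. When $\ell=2$ an interior vertex reached from below is descended from with probability only $1/2$, so there is no strict downward drift and one really must run the geometric-domination argument, in particular checking that an excursion climbing all the way from level $i$ up to the surface --- an event of probability $\lesssim 2^{-i}$ --- still contributes only $O(1)$ on average, and attending to the bookkeeping at the surface boundary and the small-surface degeneracies. The per-step arithmetic estimates, by contrast, are routine.
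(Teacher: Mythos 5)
Your proposal follows essentially the same route as the paper: the per-step cost analysis (forming $\Phi_\ell(v_s,Y)$ in $O(\ell^2\M(n))$, then $Y^q\bmod\phi$ via Kronecker substitution in $O(\M(\ell n)n)$, with the gcd and Rabin-style root extraction absorbed) is the paper's argument verbatim, and the step-count argument rests on the same two facts the paper uses, namely that a non-backtracking walk descends with probability bounded below by a constant until its first descending step, after which descent is forced. Your treatment of the walk is in fact more careful than the paper's one-line "at least $1/3$ of the neighbors are descending" claim: you correctly note that after excluding the backtracking edge this fraction can drop to $0$ for a split surface vertex reached from below when $\ell=2$, and you patch this with the forced-horizontal-then-descend argument and the geometric domination of the ascent excursions; the paper elides this. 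One small point you gloss over in the other direction: $\gcd(Y^q-Y,\phi)$ yields the number of \emph{distinct} roots, whereas the floor test needs roots counted with multiplicity when $\ell\le 3$ (a surface vertex with a double edge can have only two distinct neighbors); the paper handles this by taking gcds with derivatives of $\phi$, within the same time bound, and your write-up should do likewise.
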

\begin{proof}
Computing $\phi(Y)=\Phi_\ell(v,Y)$ involves $O(\ell^2)$ $\Fq$-operations, or $O(\ell^2\M(n))$ bit operations.
The neighbors of $v$ are the distinct roots of $\phi(Y)$ that lie in $\Fq$, which are precisely the roots of $f(Y)=\gcd(Y^q-Y,\phi(Y))$.
Computing $Y^q\bmod \phi$ involves $O(n)$ multiplications in the ring $\Fq[Y]/(\phi)$, each of which can be accomplished using $O(\M(\ell n))$ bit operations, via Kronecker substitution \cite{Gathen:ComputerAlgebra}, yielding an $O(\M(\ell n)n)$ bound.
With the fast Euclidean algorithm the gcd of two polynomials of degree $O(\ell)$ can be computed using $O(\M(\ell n)\log\ell)$ bit operations. 
If $\log\ell < n$ then this is bounded by $O(\M(\ell n)n)$, and otherwise it is bounded by $O(\ell^2\M(n))$.
Thus the total time to compute $f(Y)$ for any particular $v$ is $O(\ell^2\M(n) + \M(\ell n)n)$.

The degree of $f(Y)$ is the number of distinct roots of $\Phi_\ell(Y,v)$ in $\Fq$.
For $\ell > 3$, this is less than or equal to 2 if and only if $v$ is on the floor.
For $\ell\le 3$ we can count roots with multiplicity  by taking gcds with derivatives of $\phi$, within the same time bound.
To find a random root of $f(Y)$ we use the probabilistic splitting algorithm of \cite{Rabin:RootFinding}; since we need only one root, this takes $O(\M(\ell n)n)$ expected time.

For every vertex $v$ in a level $V_i$ above the floor, at least 1/3 of $v$'s neighbors lie in in level $V_{i+1}$, thus within $O(1)$ expected steps the path will be extended along a descending edge.
Once this occurs, every subsequent edge in the path must be descending, since we are not allowed to backtrack along the single ascending edge, and will reach the floor within $\delta+O(1)$ steps.
\end{proof}

\begin{remark}[\textbf{Removing known roots}]
As a minor optimization, rather than picking $v_{s+1}$ as a root of $\phi(Y)=\Phi_\ell(v_s,Y)$ in step 3 of the \textsc{FindFloor} algorithm, we may use $\phi(Y)/(Y-v_{s-1})^e$, where $e$ is the multiplicity of $v_{s-1}$ as a root of $\phi(Y)$.
This is slightly faster and eliminates the need to check that $v_{s+1}\ne v_{s-1}$.
\end{remark}

The \textsc{FindFloor} algorithm  finds a path of expected length $\delta+O(1)$ from $v_0$ to the floor.
With a bit more effort we can find a path of exactly length $\delta$, using a simplified version of an algorithm from \cite{Fouquet:IsogenyVolcanoes}.
\medskip

\noindent
\textbf{Algorithm} \textsc{FindShortestPathToFloor}\\
Given an ordinary $v_0\in G_\ell(\Fq)$, find a shortest path to the floor of its component.
\begin{enumerate}[1.]
\item Let $v_0=j(E)$.  If $\deg v_0 \le 2$ then output $v_0$ and terminate.
\item Pick three neighbors of $v_0$ and extend paths from each of these neighbors in parallel, stopping as soon as any of them reaches the floor.\footnote{If $v_0$ does not have three distinct neighbors then just pick all of them.} 
\item Output a path that reached the floor.
\end{enumerate}
\smallskip

The correctness of the algorithm follows from the fact that at most two of $v_0$'s neighbors do not lie along descending edges, so one of the three paths must begin with a descending edge.
This path must then consist entirely of descending edges, yielding a shortest path to the floor.
The algorithm takes at most $3\delta$ steps, each of which has complexity bounded as in Proposition~\ref{prop:findfloor}.

The main virtue of  \textsc{FindShortestPathToFloor} is that it allows us to compute $\delta$, which tells us the level $V_{d-\delta}$ of $j(E)$ relative to the floor $V_d$.
It effectively gives us an ``altimeter'' $\delta(v)$ that we may be used to navigate $V$.
We can determine whether a given edge $(v_1,v_2)$ is horizontal, ascending, or descending, by comparing $\delta(v_1)$ to $\delta(v_2)$, and we can determine the exact level of any vertex; see \cite[\S 4.1]{Sutherland:HilbertClassPolynomials} for algorithms and further details.
We should also mention that an alternative approach based on pairings has recently been developed by Ionica and Joux \cite{Ionica:PairingTheVolcano2,Ionica:PairingTheVolcano}, which is more efficient when $d$ is large.

\subsection{Identifying supersingular curves}
Both algorithms in the previous section assume that their input is the $j$-invariant of an ordinary elliptic curve.
But what if this is not the case?
If we attempt to ``find the floor'' on the supersingular component of $G_\ell(\Fptwo)$ we will never succeed, since every vertex has degree $\ell+1$.
On the other hand, from part (iv) of Theorem~\ref{thm:Kohel} (and Remark~\ref{remark:special}), we know that every ordinary component of $G_\ell(\Fptwo)$ has depth less than $\log_\ell 2p$, so we can bound the length of the shortest path to the floor from any ordinary vertex.

This suggests that, with minor modifications, the algorithm \textsc{FindShortestPathToFloor} can be used to determine whether a given elliptic curve $E/\Fq$ is ordinary or supersingular.
If $j(E)\not\in\Fptwo$ then $E$ must be ordinary, so we may assume $v_0=j(E)\in\Fptwo$ (even if $E$ is defined over $\Fp$, we want to work in $\Fptwo$).
We modify step 2 of the algorithm so that if none of the three paths reaches the floor within $\log_\ell 2p$ steps, it reports that its input is supersingular and terminates.
Otherwise, the algorithm succeeds and can report that its input is ordinary.
This works for any prime $\ell$, but using $\ell=2$ gives the best running time.

This yields a Las Vegas algorithm to determine whether a given elliptic curve is ordinary or supersingular in $\tilde{O}(n^3)$ expected time, where $n=\log q$.
For comparison, the best previously known Las Vegas algorithm has an expected running time of $\tilde{O}(n^4)$, and the best known deterministic algorithm runs in $\tilde{O}(n^5)$ time.
Remarkably, the average time for a random input is only $\tilde{O}(n^2)$.
This matches the complexity of the best known Monte Carlo algorithm for this problem, with better constant factors; see \cite{Sutherland:Supersingular} for further details.

\subsection{Computing endomorphism rings}
We now turn to a more difficult problem: determining the endomorphism ring of an ordinary elliptic curve $E/\Fq$.
We assume that the trace of Frobenius $t=\tr \pi_E$ is known; this can be computed in polynomial time using Schoof's algorithm \cite{Schoof:ECPointCounting1}.
By factoring $4q-t^2$, we can compute the positive integer $v$ and fundamental discriminant $D$ satisfying the norm equation $4q=t^2-v^2D$.
We then know that $\Z[\pi_E]$ has index $v$ in the maximal order $\O_K$, where $K=\Q(\sqrt{D})$.
The order $\O\simeq \End(E)$ is uniquely determined by its index $u$ in $\O_K$, and $u$ must be a divisor of $v$.
Let us assume $D<-4$.

We can determine $u$ by determining the level of $j(E)$ in its component of $G_\ell(\Fq)$ for each of the primes $\ell$ dividing $v$.
If $v=\ell_1^{e_1}\cdots\ell_w^{e_w}$ is the prime factorization of~$v$, then $u=\ell_1^{d_1}\cdots\ell_w^{d_w}$, where $\delta_i=e_i-d_i$ is the distance from $j(E)$ to the floor of its $\ell_i$-volcano.
But it may not be practical to compute $\delta_i$ using \textsc{FindShortestPathToFloor} when $\ell_i$ is large:
its complexity is quasi-quadratic in~$\ell_i$, which may be exponential in $\log q$ (and computing $\Phi_{\ell_i}$ is even harder).
More generally, we do not know any algorithm for computing a vertical $\ell$-isogeny whose complexity is not at least linear in $\ell$ (in general, quadratic in $\ell$).
This would seem to imply that we cannot avoid a running time that is exponential in $\log q$.

However, as noted in Remark~\ref{remark:decomposing}, computing horizontal isogenies is easier than computing vertical isogenies.
We now sketch an approach to computing $\End(E)$ that uses horizontal isogenies to handle large primes dividing $v$, based on the algorithm in~\cite{BissonSutherland:Endomorphism}.
To simplify the presentation, we assume that $v$ is square-free; the generalization to arbitrary $v$ is straight-forward.

Let $\mathcal{L}$ be the lattice of orders in $\O_K$ that contain $\Z[\pi_E]$.
Our strategy is to determine whether $u$ is divisible by a given prime divisor $\ell$ of $v$ using a smooth relation that holds in an order $\O\in \mathcal{L}$ if and only if $\O$ is maximal at $\ell$.
This relation will hold in $\End(E)$ if and only if $u$ is not divisible by $\ell$.

A \emph{smooth relation} $R$ is a multiset $\{\fp_1^{r_1}\cdots \fp_s^{r_s}\}$ in which the~$\fp_i$ are invertible $\Z[\pi_E]$-ideals with prime norms $p_i$ occurring with multiplicity $r_i$, such that $p_i$ and~$r_i$ satisfy bounds that are subexponential in $\log q$.
We say that $R$ \emph{holds} in $\O\in\mathcal{L}$ if the $\O$-ideal $R_\O=(\fp_1\O)^{r_1}\cdots(\fp_s\O)^{r_s}$ is principal.
If $\O'\subset\O$, the existence of the norm-preserving homomorphism $\rho\colon \cl(\O')\to\cl(\O)$ defined as in \S\ref{subsection:vertical} implies that if $R$ holds in $\O'$, then it holds in $\O$.
It thus suffices to find a relation that holds in the order of index $v/\ell$ in $\O_K$, but not in the order of index $\ell$ in $\O_K$.
Under the GRH, for $\ell>3$ we can find such an $R$ in probabilistic subexponential time \cite{Bisson:Endomorphism}.

To determine whether $R$ holds in $\O\simeq\End(E)$, we compute the CM action of $[R_\O]\in\cl(\O)$ on $j(E)\in\Ell_O(\Fq)$.
This involves walking $r_i$ steps along the surface of a $p_i$-volcano for each of the $\fp_i$ appearing in $R$ and then checking whether we wind up back at our starting point $j(E)$.
None of the $p_i$ divide $v$, so these $p_i$-volcanoes all have depth 0 and consist of either a single edge or a cycle.
We must choose a direction to walk along each cycle (one corresponds to the action of $\fp_i$, the other to~$\bar\fp_i$).
There are methods to determine the correct choice, but in practice we can make $s$ small enough so that it is easy to simply try every combination of choices and count how many work; see \cite{BissonSutherland:Endomorphism} for details.

Under the GRH, this algorithm has a subexponential expected running time of $L[1/2,\sqrt{3}/2]$ plus the cost of factoring $4q-t^2$ (the latter is heuristically negligible, using the number field sieve, and provably bounded by $L[1/2,1]$ in \cite{Lenstra:RigorousFactoring}).
Bisson  \cite{Bisson:Endomorphism} has recently improved this to $L[1/2,\sqrt{2}/2]$ plus the cost of factoring $4q-t^2$.

\begin{example}
Let $q=2^{320}+261$ and suppose that $E/\Fq$ has Frobenius trace
\[
t=2306414344576213633891236434392671392737040459558.
\]
Then $4q=t^2-v^2D$, where $D=-147759$ and $v=2^2p_1p_2$, with
\begin{align*}
p_1 &= 16447689059735824784039,\\
p_2 &= 71003976975490059472571.
\end{align*}
We can easily determine the level of $j(E)$ in its 2-volcano by finding a shortest path to the floor.
For $p_1$ and $p_2$ we instead use smooth relations $R_1$ and $R_2$.

Let $\O_1$ be the order of index $p_1$ in $\O_K$, and $\O_1'$ the order of index $v/p_1$ in~$\O_K$.
The relation
\[
R_1 = \{\fp_5, \fp_{19}^2, \bar\fp_{23}^{210},\fp_{29},\fp_{31},\bar\fp_{41}^{145},\fp_{139}, \bar\fp_{149},\fp_{167},\bar\fp_{191},\bar\fp_{251}^6,\fp_{269},\bar\fp_{587}^7,\bar\fp_{643}\}
\]
holds in $\O_1$ but not in $\O_1'$ (here $\fp_\ell$ denotes the ideal of norm $\ell$ corresponding to the reduced binary quadratic form $\ell x^2+bxy+cy^2$ with $b\ge 0$).
If we now let~$\O_2$ be the order of index $p_2$ in $\O_K$ and $\O_2'$ the order of index $v/p_2$ in $\O_K$, then
\[
R_2 = \{ \fp_{11},\bar\fp_{13}^{576},\fp_{23}^2,\bar\fp_{41},\bar\fp_{47},\fp_{83},\fp_{101},\bar\fp_{197}^{28},\bar\fp_{307}^3,\fp_{317},\bar\fp_{419},\fp_{911}\}
\]
holds in $\O_2$ but not in $\O_2'$.

%

Including the time to compute the required modular polynomials and the time to find the relations $R_1$ and $R_2$, the total time to compute $\End(E)$ in this example is less than half an hour.
In contrast, it would be completely infeasible to directly compute a vertical isogeny of degree $p_1$ or $p_2$; writing down even a single element of the kernel of such an isogeny would require more than $2^{80}$ bits.
\end{example}

\subsection{Computing Hilbert class polynomials}\label{subsection:Hilbert}
Let $\O$ be an imaginary quadratic order with discriminant $D$.
The \emph{Hilbert class polynomial} $H_D$ is defined by
\[
H_D(X) = \prod_{j\in\Ell_\O(\C)} (X-j).
\]
Equivalently, $H_D(X)$ is the minimal polynomial of the $j$-invariant of the lattice~$\O$ over the field $K=\Q(\sqrt{D})$.
Remarkably, its coefficients lie in $\Z$.

The field $K_\O=K(j(\O))$ is the \emph{ring class field} of~$\O$.
If $q$ splits completely in~$K_\O$, then $H_D(X)$ splits completely in $\Fq[X]$ and its roots form the set $\Ell_\O(\Fq)$.
Each root is then the $j$-invariant of an elliptic curve $E/\Fq$ with $\End(E)\simeq \O$.
We must have $\#E(\Fq)=q+1-t$, where $t=\tr(\pi_E)$ is prime to $q$, and the norm equation $4q=t^2-v^2D$ then uniquely determines the integers $t$ and $v$ up to sign, for $D<-4$.
We can thus use a root of $H_D(X)$ in $\Fq$ to construct an elliptic curve $E/\Fq$ with exactly $q+1-t$ rational points.
Under reasonably heuristic assumptions \cite{Broker:EfficientCMConstructions}, 
by choosing $q$ and $D$ appropriately we can achieve any desired cardinality for $E(\Fq)$.
This is known as the \emph{CM method}, and is widely used in elliptic curve cryptography and elliptic curve primality proving.

We now outline an algorithm to compute $H_D(X)$ using the CRT approach described in \cite{Belding:HilbertClassPolynomial,Sutherland:HilbertClassPolynomials}.
Under the GRH it runs in $O(|D|(\log|D|)^{5+o(1)})$ expected time, which is quasi-linear in the $O(|D|\log|D|)$ size of $H_D(X)$.
The same approach can be used to compute many other types of class polynomials; see \cite{EngeSutherland:CRTClassInvariants}.
\medskip

\noindent
\textbf{Algorithm} \textsc{ComputeHilbertClassPolynomial}\\
Given an imaginary quadratic discriminant $D$, compute $H_D(X)$ as follows:
\begin{enumerate}[1.]
\item Select a sufficiently large set of primes $p$ that satisfy $4p=t^2-v^2D$.
\item For each prime $p$, compute $H_D(X)\bmod p$ as follows:
\begin{enumerate}[a.]
\item Generate random elliptic curves $E/\Fp$ until $\#E(\Fp)=p+1-t$.
\item Use volcano climbing to find $E'$ isogenous to $E$ with $\End(E')\simeq \O$.
\item Enumerate $\Ell_\O(\Fp)$ by applying the $\cl(\O)$-action to $j(E')$.
\item Compute $H_D(X) = \prod_{j\in\Ell_\O(\Fp)}(X-j) \bmod p$.
\end{enumerate}
\item Use the CRT to recover $H_D(X)$ over $\Z$ (or over $\Fq$, via the explicit CRT).
\end{enumerate}
\smallskip

Isogeny volcanoes play a key role in the efficient implementation of this algorithm, not only in step 2b, but also in step 2c, which is the most critical step and merits further discussion.  Given any sequence of generators $\alpha_1,\ldots,\alpha_k$ for a finite abelian group $G$, if we let $G_i=\langle \alpha_1,\ldots,\alpha_i\rangle$ and define $r_i = [G_i:G_{i-1}]$, then every element $\beta$ of $G$ can be uniquely represented in the form $\beta = \alpha_1^{e_1}\cdots \alpha_k^{e_k}$, with $0\le e_i < r_i$.
This is a special case of a \emph{polycyclic presentation}.
We can use a polycyclic presentation of $\cl(\O)$ to enumerate the torsor $\Ell_\O(\Fp)$ by
enumerating the list of exponent vectors $(e_1,\ldots,e_k)$ in reverse lexicographic order.
At each step we apply the action of the generator $\alpha_i$ that transforms the current exponent vector to the next in the list (usually $i=1$, since $e_1$ varies most frequently).

Using generators of the form $\alpha_i=[\fl_i]$, where $\fl_i$ is an invertible $\O$-ideal of prime norm $\ell_i$, this amounts to walking along the surfaces of various $\ell$-volcanoes.
To make this process as efficient as possible, it is crucial to minimize the primes $\ell_i$.
This is achieved by choosing $\fl_1$ to minimize $\ell_1$ and then minimizing each $\ell_i$ subject to $[\fl_i]\not\in \langle [\fl_1],\ldots,[\fl_{i-1}]\rangle$; this is called an \emph{optimal presentation} \cite[\S 5.1]{Sutherland:HilbertClassPolynomials}.
This will often cause us to use a set of generators that is larger than strictly necessary.

As an example, for $D=-79947$ the class group $\cl(\O)$ is cyclic of order 100, generated by the class of an ideal with norm 19.
But the optimal presentation for $\cl(\O)$ uses ideals $\fl_1$ and $\fl_2$ with norms 2 and 13, respectively.
The classes of these ideals are not independent, we have $[\fl_2]^{5} = [\fl_1]^{18}$, but they do form a polycyclic presentation with $r_1=20$ and $r_2=5$.  Using this presentation to enumerate $\Ell_\O(\Fp)$ is more than 100 times faster than using any single generator of $\cl(\O)$.  One can construct examples where the optimal presentation is exponentially faster than any presentation that minimizes the number of generators; see \cite[\S 5.3]{Sutherland:HilbertClassPolynomials}.

Enumerating $\Ell_\O(\Fp)$ using a polycyclic presentation involves walking along the surfaces of various $\ell$-volcanoes, as in the previous section when testing relations.
But using an optimal presentation will often mean that some of the  primes $\ell_i$ divide~$v$.
This always happens, for example, when $D\equiv 1\bmod 8$, since in this case $\ell_1=2$ divides $v$.
Thus we must be prepared to walk along the surface of an $\ell$-volcano of nonzero depth.
We now give a simple algorithm to do this.
\medskip

\noindent
\textbf{Algorithm} \textsc{WalkSurfacePath}\\
Given $v_0$ on the surface $V_0$ of an $\ell$-volcano of depth $d$ and a positive integer $n < \#V_0$, return a path $v_0,\ldots,v_n$ in $V_0$.
\begin{enumerate}[1.]
\item
If $v_0$ has a single neighbor $v_1$, then return the path $v_0,v_1$.\\
Otherwise, walk a path $v_0,\ldots,v_d$ and set $i\leftarrow 0$.
\item
While $\deg v_{i+d}=1$: replace $v_{i+1},\ldots,v_{i+d}$ by extending the path $v_0,\ldots,v_i$ by~
$d$~steps, starting from an unvisited neighbor $v_{i+1}'$ of $v_i$.
\item
Extend the path $v_0,\ldots,v_{i+d}$ to $v_0,\ldots,v_{i+d+1}$ and increment $i$.
\item
If $i=n$ then return $v_0,\ldots, v_n$; otherwise, go to step 2.
\end{enumerate}
\smallskip

Algorithm \textsc{WalkSurfacePath} requires us to know the depth $d$ of the $\ell$-volcano, which we may determine from the norm equation.
It works by walking an arbitrary path to the floor and then backing up $d$ steps to a vertex that must be on the surface (whenever we leave the surface we must descend to the floor in exactly $d$ steps).
When $d$ or $\ell$ is large, this algorithm is not very inefficient and the pairing-based approach of \cite{Ionica:PairingTheVolcano} may be faster.
But in the context of computing Hilbert class polynomials, both $d$ and $\ell$ are typically quite small.

\begin{remark}[\textbf{Walking the surface with gcds}]
An alternative approach to walking the surface using gcds is given in \cite{EngeSutherland:CRTClassInvariants}.
Suppose we have already enumerated $v_0,\ldots,v_n$ along the surface of an $\ell$-volcano, and have also taken a single step from $v_0$ to an adjacent vertex $v_0'$ on the surface of an $\ell'$-volcano.
We can then compute a path $v_0',\ldots,v_n'$ along the surface of the $\ell$-volcano containing $v_0'$ by computing each $v_{i+1}'$ as the unique root of $f(Y)=\gcd(\Phi_\ell(v_i',Y),\Phi_{\ell'}(v_{i+1},Y))$.
The vertex $v_{i+1}'$ is guaranteed to be on the surface, and the root-finding operation is trivial, since $f(Y)$ has degree 1.
This approach is generally much faster than using either \textsc{WalkSurfacePath} or the algorithm in \cite{Ionica:PairingTheVolcano}, and in practice most of the vertices in $\Ell_\O(\Fp)$ can be enumerated this way; see \cite{EngeSutherland:CRTClassInvariants} for further details.
\end{remark}

\begin{remark}[\textbf{Space complexity}]
A key virtue of the CRT approach is that by using the \emph{explicit CRT} \cite[Thm.\ 3.2]{Bernstein:ModularExponentiation}, it is possible to directly compute the coefficients of $H_D(X)$ modulo an integer $m$ (the characteristic of $\Fq$, for example), without first computing the coefficients over $\Z$.
This means we can compute $H_D(X)$ over $\Fq$ with a space complexity that is quasi-linear in $h(D)\log q$, which may be much smaller than $|D|\log|D|$.
When $h(D)$ is sufficiently composite (often the case), we can use a decomposition of the ring class field to find a root of $H_D(X)$ in~$\Fq$ with a space complexity quasi-linear in $h(D)^{1/2}\log q$; see \cite{Sutherland:CMmethod}.
The low space complexity of the CRT approach has greatly increased the range of feasible discriminants for the CM method: examples with $|D|\approx 10^{16}$ can now be handled \cite{Sutherland:CMmethod}, whereas $|D|\approx 10^{10}$ was previously regarded as a practical upper limit \cite{Enge:FloatingPoint}.
\end{remark}

\subsection{Computing modular polynomials}
All of the algorithms we have discussed depend on modular polynomials $\Phi_\ell(X,Y)$; we even used them to define the graph of $\ell$-isogenies.
We now outline an algorithm to compute $\Phi_\ell$, using the CRT approach described in \cite{BrokerLauterSutherland:CRTModPoly}.
Under the GRH, it runs in $O(\ell^3(\log\ell)^{3+o(1)})$ expected time, which makes it the fastest method known for computing $\Phi_\ell(X,Y)$.
\smallskip

\noindent
\textbf{Algorithm} \textsc{ComputeModularPolynomial}\\
Given an odd prime $\ell$, compute $\Phi_\ell(X,Y)$ as follows:
\begin{enumerate}[1.]
\item Pick an order $\O$ with $h(\O)>\ell+1$ and let $D=\disc(\O)$.
\item Select a sufficiently large set of primes $p$ that satisfy $4p=t^2-\ell^2v^2D$,\\
with $\ell\nmid v$ and $p\equiv 1\bmod \ell$.
\item For each prime $p$, compute $\Phi_\ell(X,Y)\bmod p$ as follows:
\begin{enumerate}[a.]
\item Enumerate $\Ell_\O(\Fp)$ starting from a root $v_0$ of $H_D(X)\bmod p$.
\item Use V\'elu's algorithm to compute a descending $\ell$-isogeny from $v_0$ to $v_0'$.
\item Enumerate $\Ell_{\O'}(\Fp)$ using $v_0'$ as a starting point, where $[\O:\O']=\ell$.
\item Map the $\ell$-volcanoes that make up $\Ell_\O(\Fp)\cup\Ell_{\O'}(\Fp)$.
\item Interpolate $\Phi_\ell(X,Y) \bmod p$.
\end{enumerate}
\item Use the CRT to recover $\Phi_\ell(X,Y)$ over $\Z$ (or over $\Fq$ via the explicit CRT).
\end{enumerate}
\smallskip

The restrictions on $p$ ensure that each element of $\Ell_\O(\Fp)$ lies on the surface of an $\ell$-volcano of depth 1 whose floor consists of elements of $\Ell_{\O'}(\Fp)$.
An example with $\ell=5$ and $D=-151$ is shown below.
\smallskip

\begin{figure}[htp]
\begin{tikzpicture}
\draw (0,0) ellipse (5 and 0.3);
\draw (-2.8,0.25) -- (-3.1,-0.19);
\draw (-2.8,0.25) -- (-2.9,-0.19);
\draw (-2.8,0.25) -- (-2.7,-0.19);
\draw (-2.8,0.25) -- (-2.5,-0.19);
\draw[fill=red] (-2.8,0.25) circle (0.06);
\draw (0,0.3) -- (-0.42,-0.25);
\draw (0,0.3) -- (-0.14,-0.25);
\draw (0,0.3) -- (0.14,-0.25);
\draw (0,0.3) -- (0.42,-0.25);
\draw[fill=red] (0,0.3) circle (0.06);
\draw (2.8,0.25) -- (3.1,-0.19);
\draw (2.8,0.25) -- (2.9,-0.19);
\draw (2.8,0.25) -- (2.7,-0.19);
\draw (2.8,0.25) -- (2.5,-0.19);
\draw[fill=red] (2.8,0.25) circle (0.06);
\draw (-4.2,-0.17) -- (-5.10,-1.5);
\draw[fill=red] (-5.10,-1.5) circle (0.06);
\draw (-4.2,-0.17) -- (-4.50,-1.5);
\draw[fill=red] (-4.50,-1.5) circle (0.06);
\draw (-4.2,-0.17) -- (-3.90,-1.5);
\draw[fill=red] (-3.90,-1.5) circle (0.06);
\draw (-4.2,-0.17) -- (-3.30,-1.5);
\draw[fill=red] (-3.30,-1.5) circle (0.06);
\draw[fill=red] (-4.2,-0.17) circle (0.06);

\draw (-1.4,-0.29) -- (-2.30,-1.5);
\draw[fill=red] (-2.30,-1.5) circle (0.06);
\draw (-1.4,-0.29) -- (-1.70,-1.5);
\draw[fill=red] (-1.70,-1.5) circle (0.06);
\draw (-1.4,-0.29) -- (-1.10,-1.5);
\draw[fill=red] (-1.10,-1.5) circle (0.06);
\draw (-1.4,-0.29) -- (-0.50,-1.5);
\draw[fill=red] (-0.50,-1.5) circle (0.06);
\draw[fill=red] (-1.4,-0.29) circle (0.06);

\draw (1.4,-0.29) -- (2.30,-1.5);
\draw[fill=red] (2.30,-1.5) circle (0.06);
\draw (1.4,-0.29) -- (1.70,-1.5);
\draw[fill=red] (1.70,-1.5) circle (0.06);
\draw (1.4,-0.29) -- (1.10,-1.5);
\draw[fill=red] (1.10,-1.5) circle (0.06);
\draw (1.4,-0.29) -- (0.50,-1.5);
\draw[fill=red] (0.50,-1.5) circle (0.06);
\draw[fill=red] (1.4,-0.29) circle (0.06);

\draw (4.2,-0.17) -- (5.10,-1.5);
\draw[fill=red] (5.10,-1.5) circle (0.06);
\draw (4.2,-0.17) -- (4.50,-1.5);
\draw[fill=red] (4.50,-1.5) circle (0.06);
\draw (4.2,-0.17) -- (3.90,-1.5);
\draw[fill=red] (3.90,-1.5) circle (0.06);
\draw (4.2,-0.17) -- (3.30,-1.5);
\draw[fill=red] (3.30,-1.5) circle (0.06);
\draw[fill=red] (4.2,-0.17) circle (0.06);
\end{tikzpicture}
\end{figure}
\vspace{-8pt}

When we enumerate $\Ell_\O(\Fp)$ in step 3a, we use a polycyclic presentation $\vec\alpha$ for $\cl(\O)$ derived from prime ideals whose norms are all less than $\ell$ (for $\ell>2$ this is always possible).
By expressing the class $\gamma$ of an invertible $\O$-ideal of norm~$\ell$ in terms of $\vec\alpha$, we can then determine all of the horizontal $\ell$-isogenies between elements of $\Ell_\O(\Fp)$ without knowing $\Phi_\ell$.
In our example with $D=-151$, the presentation~$\vec\alpha$ consists of a single generator $\alpha$ corresponding to an ideal of norm~$2$, with $\gamma=\alpha^3$.  Thus our enumeration of $\Ell_\O(\Fp)$ yields a cycle of $2$-isogenies that we can convert to a cycle of $5$-isogenies by simply picking out every third element.

The application of V\'elu's algorithm in Step 3b involves picking a random point~$P$ of order $\ell$ and computing the $\ell$-isogeny $\varphi$ with $\langle P\rangle$ as its kernel.
This process is greatly facilitated by our choice of $p$, which ensures that $P$ has coordinates in~$\Fp$, rather than an extension field.
We may find that $\varphi$ is a  horizontal $\ell$-isogeny, but we can easily detect this and try again with a different $P$.

As in step 3a, when we enumerate $\Ell_{\O'}(\Fp)$ in step 3c we use a polycyclic presentation $\vec\beta$ for $\cl(\O')$ derived form prime ideals whose norms are all less than~$\ell$.
There are no horizontal $\ell$-isogenies between elements of $\Ell_{\O'}(\Fp)$, but we need to connect each element of $\Ell_{\O'}(\Fp)$ to its $\ell$-isogenous parent in $\Ell_\O(\Fp)$.
This is done by identifying one child $v'$ of each parent  and then identifying that child's siblings, which are precisely the elements of $\Ell_{\O'}(\Fp)$ that are related to $v'$ by a cyclic isogeny of degree $\ell^2$.
By expressing the class $\gamma'$ of an invertible $\O'$ ideal of norm $\ell^2$ in terms of $\vec\beta$, we can identify the $\ell^2$-isogeny cycles of siblings in $\Ell_{\O'}(\Fp)$; these are precisely the cosets of the homomorphism $\rho\colon\cl(\O')\to\cl(\O)$ in \S\ref{subsection:vertical}.

After identifying the horizontal isogenies among the vertices $v$ in $\Ell_{\O}(\Fp)$ and the children of each $v$, we can completely determine the subgraph of $G_\ell(\Fp)$ on $\Ell_\O(\Fp)\cup\Ell_{\O'}(\Fp)$; this is what it means to ``map" the $\ell$-volcanoes in step 3d.
 In our example with $D=-151$ there is just one $\ell$-volcano; the figure below depicts the result of mapping this $\ell$-volcano when $p=4451$.
\smallskip

\begin{figure}[htp]
\begin{tikzpicture}
\fontsize{6pt}{6pt}
\draw (0,0) ellipse (5 and 0.3);
\draw (-2.8,0.25) -- (-3.1,-0.19);
\draw (-2.8,0.25) -- (-2.9,-0.19);
\draw (-2.8,0.25) -- (-2.7,-0.19);
\draw (-2.8,0.25) -- (-2.5,-0.19);
\draw[fill=red] (-2.8,0.25) circle (0.06);
\draw (0,0.3) -- (-0.42,-0.25);
\draw (0,0.3) -- (-0.14,-0.25);
\draw (0,0.3) -- (0.14,-0.25);
\draw (0,0.3) -- (0.42,-0.25);
\draw[fill=red] (0,0.3) circle (0.06);
\draw (2.8,0.25) -- (3.1,-0.19);
\draw (2.8,0.25) -- (2.9,-0.19);
\draw (2.8,0.25) -- (2.7,-0.19);
\draw (2.8,0.25) -- (2.5,-0.19);
\draw[fill=red] (2.8,0.25) circle (0.06);
\draw (-4.2,-0.17) -- (-5.10,-1.5);
\draw[fill=red] (-5.10,-1.5) circle (0.06);
\draw (-4.2,-0.17) -- (-4.50,-1.5);
\draw[fill=red] (-4.50,-1.5) circle (0.06);
\draw (-4.2,-0.17) -- (-3.90,-1.5);
\draw[fill=red] (-3.90,-1.5) circle (0.06);
\draw (-4.2,-0.17) -- (-3.30,-1.5);
\draw[fill=red] (-3.30,-1.5) circle (0.06);
\draw[fill=red] (-4.2,-0.17) circle (0.06);

\draw (-1.4,-0.29) -- (-2.30,-1.5);
\draw[fill=red] (-2.30,-1.5) circle (0.06);
\draw (-1.4,-0.29) -- (-1.70,-1.5);
\draw[fill=red] (-1.70,-1.5) circle (0.06);
\draw (-1.4,-0.29) -- (-1.10,-1.5);
\draw[fill=red] (-1.10,-1.5) circle (0.06);
\draw (-1.4,-0.29) -- (-0.50,-1.5);
\draw[fill=red] (-0.50,-1.5) circle (0.06);
\draw[fill=red] (-1.4,-0.29) circle (0.06);

\draw (1.4,-0.29) -- (2.30,-1.5);
\draw[fill=red] (2.30,-1.5) circle (0.06);
\draw (1.4,-0.29) -- (1.70,-1.5);
\draw[fill=red] (1.70,-1.5) circle (0.06);
\draw (1.4,-0.29) -- (1.10,-1.5);
\draw[fill=red] (1.10,-1.5) circle (0.06);
\draw (1.4,-0.29) -- (0.50,-1.5);
\draw[fill=red] (0.50,-1.5) circle (0.06);
\draw[fill=red] (1.4,-0.29) circle (0.06);

\draw (4.2,-0.17) -- (5.10,-1.5);
\draw[fill=red] (5.10,-1.5) circle (0.06);
\draw (4.2,-0.17) -- (4.50,-1.5);
\draw[fill=red] (4.50,-1.5) circle (0.06);
\draw (4.2,-0.17) -- (3.90,-1.5);
\draw[fill=red] (3.90,-1.5) circle (0.06);
\draw (4.2,-0.17) -- (3.30,-1.5);
\draw[fill=red] (3.30,-1.5) circle (0.06);
\draw[fill=red] (4.2,-0.17) circle (0.06);

\draw (-4.2,0) node{\color{blue}901};
\draw (-4.2,0) node{901};
\draw (-4.2,0) node{\color{blue}901};
\draw (-4.2,0) node{901};
\draw (-1.4,-0.12) node{\color{blue}351};
\draw (-1.4,-0.12) node{351};
\draw (1.4,-0.12) node{\color{blue}2215};
\draw (1.4,-0.12) node{2215};
\draw (4.2,0) node{\color{blue}2501};
\draw (4.2,0) node{2501};
\draw (2.8,0.42) node{\color{blue}2872};
\draw (2.8,0.42) node{2872};
\draw (0,0.46) node{\color{blue}1582};
\draw (0,0.46) node{1582};
\draw (-2.8,0.42) node{\color{blue}701};
\draw (-2.8,0.42) node{701};
\draw[blue] (-4.2,-0.17) -- (-5.10,-1.5);
\draw[fill=red] (-4.2,-0.17) circle (0.06);
\draw[fill=red] (-5.10,-1.5) circle (0.06);
\draw (-5.10,-1.7) node{\color{blue}3188};
\draw (-5.10,-1.7) node{3188};
\draw[black] (-4.2,-0.17) -- (-5.10,-1.5);
\draw[fill=red] (-4.2,-0.17) circle (0.06);
\draw[fill=red] (-5.10,-1.5) circle (0.06);
\draw (-5.10,-1.7) node{\color{blue}3188};
\draw (-4.50,-1.7) node{\color{blue}2970};
\draw (-3.90,-1.7) node{\color{blue}1478};
\draw (-3.30,-1.7) node{\color{blue}3328};
\draw (-5.10,-1.7) node{3188};
\draw (-4.50,-1.7) node{2970};
\draw (-3.90,-1.7) node{1478};
\draw (-3.30,-1.7) node{3328};
\draw (-2.3,-1.7) node{\color{blue}3508};
\draw (-1.7,-1.7) node{\color{blue}2464};
\draw (-1.1,-1.7) node{\color{blue}2976};
\draw (-0.5,-1.7) node{\color{blue}2566};
\draw (-2.3,-1.7) node{3508};
\draw (-1.7,-1.7) node{2464};
\draw (-1.1,-1.7) node{2976};
\draw (-0.5,-1.7) node{2566};
\draw (2.3,-1.7) node{\color{blue}3341};
\draw (1.7,-1.7) node{\color{blue}1868};
\draw (1.1,-1.7) node{\color{blue}2434};
\draw (0.5,-1.7) node{\color{blue}676};
\draw (2.3,-1.7) node{3341};
\draw (1.7,-1.7) node{1868};
\draw (1.1,-1.7) node{2434};
\draw (0.5,-1.7) node{676};
\draw (5.10,-1.7) node{\color{blue}3147};
\draw (4.50,-1.7) node{\color{blue}2255};
\draw (3.90,-1.7) node{\color{blue}1180};
\draw (3.30,-1.7) node{\color{blue}3144};

\draw (5.10,-1.7) node{3147};
\draw (4.50,-1.7) node{2225};
\draw (3.90,-1.7) node{1180};
\draw (3.30,-1.7) node{3144};
\end{tikzpicture}
\end{figure}
\vspace{-8pt}

In step 3e we compute, for each of $\ell+2$ vertices $v_i\in\Ell_\O(\Fp)$, the polynomial $\phi_i(Y)=\Phi_\ell(v_i,Y)=\prod_j(Y-v_{ij})$, where $v_{ij}$ ranges over the $\ell+1$ neighbors of $v_i$ in $G_\ell(\Fp)$.  We can then interpolate the coefficients of $\Phi_\ell(X,Y)=\sum_{i,j} c_{ij}X^iY^j$ as follows:
if $\psi_j(X)$ is the unique polynomial of degree at most $\ell+1$ for which $\psi_j(v_i)$ is the coefficient of $Y^j$ in $\phi_i(Y)$, then $c_{ij}$ is the coefficient of $X^i$ in $\psi_j(X)$.

\begin{remark}[\textbf{Weber modular polynomials}]
This algorithm can compute modular polynomials for many modular functions besides the $j$-function; see \cite[\S 7]{BrokerLauterSutherland:CRTModPoly}.
This includes the Weber $f$-function that satisfies $(f(z)^{24}-16)^3 =f(z)^{24}j(z)$.
The modular polynomials $\Phi_\ell^f(X,Y)$ for $f(z)$ are sparser than $\Phi_\ell(X,Y)$ by a factor of~$24$, and have coefficients whose binary representation is smaller by a factor of approximately~$72$.
Thus the total size of $\Phi_\ell^f$ is roughly $1728$ times smaller than $\Phi_\ell$, and it can be computed nearly $1728$ times faster.
\end{remark}

\begin{remark}[\textbf{Modular polynomials of composite level}]
A generalization of this approach that efficiently computes modular polynomials $\Phi_N(X,Y)$ for composite values of $N$ can be found in \cite{BruinierOnoSutherland:PartitionClassPolynomials}.
\end{remark}

\begin{remark}[\textbf{Evaluating modular polynomials}]
Most applications that use $\Phi_\ell(X,Y)$, including all the algorithms we have considered here, only require the instantiated polynomial $\phi(Y)=\Phi_\ell(j(E),Y)$.
A space-efficient algorithm for directly computing $\phi(Y)$ without using $\Phi_\ell(X,Y)$ appears elsewhere in this volume \cite{Sutherland:ModpolyEvaluation}.
\end{remark}

The isogeny volcano algorithm for computing  $\Phi_\ell(X,Y)$ has substantially increased the feasible range of $\ell$:
it is now possible to compute $\Phi_\ell$ with $\ell\approx10,000$, and for $\Phi_\ell^f$ we can handle $\ell\approx 60,000$.
It has also greatly reduced the time required for these computations, as may be seen in the tables of \cite[\S 8]{BrokerLauterSutherland:CRTModPoly}.

\section{Acknowledgements}
I am grateful to Gaetan Bisson for his feedback on an early draft of this article, and to the editors, for their careful review.

\bibliographystyle{amsplain}
\providecommand{\bysame}{\leavevmode\hbox to3em{\hrulefill}\thinspace}
\providecommand{\MR}{\relax\ifhmode\unskip\space\fi MR }
\providecommand{\MRhref}[2]{%
  \href{http://www.ams.org/mathscinet-getitem?mr=#1}{#2}
}
\providecommand{\href}[2]{#2}

\end{document}